\documentclass[11pt]{article}
\usepackage{listings}
\usepackage{pdflscape}
\usepackage{amsfonts}
\usepackage{epsfig}
\usepackage{lscape}
\usepackage{longtable}
\usepackage{amsmath,amssymb,amsthm}
\usepackage{graphicx}
\usepackage{subfigure}
\usepackage{color}
\usepackage{placeins}
\usepackage{url}
\usepackage{cases}
\usepackage{longtable}
\usepackage{supertabular}
\usepackage{multirow}
\newcommand{\tabincell}[2]{\begin{tabular}{@{}#1@{}}#2\end{tabular}}
\usepackage{amsmath}
\allowdisplaybreaks[4]
\usepackage{cite}
\usepackage{algorithmic}
\usepackage{algorithm,float}
\usepackage{booktabs}

\oddsidemargin 0pt
\evensidemargin 0pt
\marginparwidth 10pt
\marginparsep 10pt
\topmargin -20pt
\textwidth 6.5in
\textheight 8.5in
\parindent = 20pt

\makeatletter

\makeatother

\newtheorem{theorem}{Theorem}
\newtheorem{assumption}{Assumption}

\newtheorem{remark}{Remark}
\newtheorem{definition}{Definition}
\newtheorem{proposition}{Proposition}

\begin{document}

\title{\bf A dual Newton based preconditioned proximal point algorithm for exclusive lasso models\footnotemark[1]}
\author{Meixia Lin\footnotemark[2], \quad Defeng Sun\footnotemark[3], \quad Kim-Chuan Toh\footnotemark[4], \quad Yancheng Yuan\footnotemark[5]}
\date{December 06, 2019}
\maketitle

\renewcommand{\thefootnote}{\fnsymbol{footnote}}
\footnotetext[1]{{\bf Funding:} Defeng Sun is supported in part by Hong Kong Research Grant Council grant PolyU153014/18p and Kim-Chuan Toh by ARF grant R146-000-257-112 of the Ministry of Education of Singapore.}
\footnotetext[2]{Department of Mathematics, National University of Singapore, 10 Lower Kent Ridge Road, Singapore ({\tt lin\_meixia@u.nus.edu}).}
\footnotetext[3]{Department of Applied Mathematics, The Hong Kong Polytechnic University, Hung Hom, Hong Kong ({\tt defeng.sun@polyu.edu.hk}).}
\footnotetext[4]{Department of Mathematics and Institute of Operations Research and Analytics, National University of Singapore, 10 Lower Kent Ridge Road, Singapore ({\tt mattohkc@nus.edu.sg}).}
\footnotetext[5]{Department of Mathematics, National University of Singapore, 10 Lower Kent Ridge Road, Singapore ({\tt yuanyancheng@u.nus.edu}).}
\renewcommand{\thefootnote}{\arabic{footnote}}

\begin{abstract}
	The exclusive lasso (also known as elitist lasso) regularization has become popular recently due to its superior performance on group sparsity. Compared to the group lasso regularization which enforces the competition on variables among different groups, the exclusive lasso regularization also enforces the competition within each group. In this paper, we propose a highly efficient dual Newton based preconditioned proximal point algorithm (PPDNA) to solve machine learning models involving the exclusive lasso regularizer. As an important ingredient, we provide a rigorous proof for deriving the closed-form solution to the proximal mapping of the weighted exclusive lasso regularizer. In addition, we derive the corresponding HS-Jacobian to the proximal mapping and analyze its structure --- which plays an essential role in the efficient computation of the PPA subproblem via applying a semismooth Newton method on its dual. Various numerical experiments in this paper demonstrate the superior performance of the proposed PPDNA against other state-of-the-art numerical algorithms.
\end{abstract}

\medskip
\noindent
{\bf Keywords:} 	Exclusive lasso, preconditioned proximal point algorithm, dual Newton algorithm
\\[5pt]
{\bf AMS subject classification:} 90C06, 90C25, 90C90

\section{Introduction}
\label{sec:intro}
Structured sparsity is very important in feature learning, not only for avoiding over-fitting, but also in making the model more interpretable. Many regularizers and their combinations have been proposed to enforce sparsity for parameterized machine learning models. The most popular ones among them are probably the classical lasso \cite{tibshirani1996regression} and the group lasso \cite{yuan2006model} regularizers. Lasso, group lasso and their variants have been extensively studied in terms of both their statistical properties \cite{tibshirani1996regression,yuan2006model,zou2006adaptive} and efficient numerical computations
\cite{beck2009fast,efron2004least,li2018highly,zhang2018efficient}. The classical lasso model has been important in enforcing sparsity on variables while performing feature selection. However, there is no structure enforced in the sparsity pattern. Instead, the group lasso is known to enforce the sparsity at an inter-group level, where variables from different groups compete to be selected.

However, in some real applications, in addition to the unstructured sparsity (e.g. lasso) or the inter-group level structured sparsity (e.g. group lasso), we also need the intra-group level sparsity. That is, not only features from different groups, but also features in a seemingly cohesive group are competing to survive. One real example comes from building an index exchange-traded fund (index ETF) to track a specific index in the stock market. To diversify the risk across different sectors, we need to perform portfolio selection both across and within sectors, which indeed means that we also need the intra-group level sparsity. To achieve this, a new regularizer called the exclusive lasso has been proposed in  \cite{kong2014exclusive,zhou2010exclusive} (also named as elitist lasso \cite{kowalski2009sparse}). Let $w\in \mathbb{R}_{++}^n$ be a weight vector and $\mathcal{G} := \{g_1,\cdots,g_l\}$ be a partition of variable index groups such that $\bigcup_{i=1}^l g_i = \{1, 2, \dots, n\}$ and $g_i \bigcap g_j = \emptyset$ for any $i\neq j $. Then the corresponding  weighted exclusive lasso regularizer is defined as
\begin{equation}
\Delta^{\mathcal{G},w}(x) :=\sum_{i=1}^l \|w_{g_i} \circ x_{g_i}\|_1^2, \quad \forall\, x \in \mathbb{R}^n,\label{eq: exclusive-lasso-regularizer}
\end{equation}
where ``$\circ$'' denotes the Hadamard product, and $x_{g_i}$ denotes the sub-vector extracted from $x$ based on the index set $g_i$. As indicated in the above expression, a squared $\ell_2$-norm  is applied to   different groups, and a weighted $\ell_1$-norm is used to enforce sparsity within each group. Naturally, when solving exclusive lasso models, we can expect that each $x_{g_i}$ is nonzero under some conditions, which means that every group has representatives.

The exclusive lasso regularizer was first proposed for multi-task learning in \cite{zhou2010exclusive}, and has been widely used in applications such as image processing \cite{zhang2016robust}, sparse feature clustering \cite{yamada2017localized} and NMR spectroscopy \cite{campbell2017within}. Some algorithms have been proposed for solving models involving the exclusive lasso regularizer, such as the smooth method based on accelerated proximal gradient (APG) \cite{zhang2016robust}, the iterative least squares algorithm (ILSA) \cite{kong2014exclusive,yamada2017localized}, and the coordinate descent method \cite{campbell2017within}. However, some popular algorithmic frameworks like APG \cite{nesterov2013gradient}, FISTA \cite{beck2009fast} and alternating direction method of multipliers (ADMM) \cite{eckstein1992douglas,glowinski1975approximation} have not been used to solve these kind of problems. The main reason may lie in the fact   that the closed-form solution to the proximal mapping ${\rm Prox}_{\rho\|w \circ \cdot\|_1^2}(\cdot)$ is unknown to all the previous works, although it has already been proposed in \cite{kowalski2009sparse}. In order to adopt a proximal gradient method to solve the exclusive lasso model, Campbell et al. \cite{campbell2017within} used an iterative subroutine to compute ${\rm Prox}_{\rho\|\cdot\|_1^2}(\cdot)$ with uniform weights.

In this paper, we recap the closed-form solution derived for the proximal mapping of the exclusive lasso regularizer in \cite{kowalski2009sparse}. As the derivation in \cite{kowalski2009sparse} is not completely rigorous\footnote{The author uses the gradient of the  exclusive regularizer to derive his formula while the latter is not differentiable in general. See a counterexample in Section \ref{subsec:prox}.}, here we provide a rigorous proof based on a quadratic programming reformulation of the proximal mapping minimization problem and the corresponding Karush-Kuhn-Tucker (KKT) conditions. As mentioned above, such a closed-form solution can be directly used in some popular algorithmic frameworks such as APG and ADMM for solving the exclusive lasso models. However, based on our numerical experiments, it is very challenging for these algorithms to solve large scale exclusive lasso problems.

To overcome the numerical challenges in large scale cases, we design a highly efficient second-order type algorithm, the dual Newton based preconditioned proximal point algorithm (PPDNA), to solve the exclusive lasso model. As a key ingredient for PPDNA, we derive the HS-Jacobian of ${\rm Prox}_{\rho\|w \circ \cdot\|_1^2}(\cdot)$. We also conduct numerical experiments to demonstrate the superior performance of PPDNA for solving popular machine learning models with the exclusive lasso regularizer, comparing to other state-of-the-art algorithms mentioned above.

We summarize our main contributions in this paper as follows.
\begin{enumerate}
	\item We develop a dual Newton based preconditioned proximal point algorithm (PPDNA) to solve  machine learning models involving the exclusive lasso regularizer.
	\item We provide a rigorous proof for the closed-form solution to the proximal mapping of $\rho\|w\circ\cdot\|_1^2$ and derive the corresponding generalized Jacobian. These results are critical for the computational efficiency of various algorithmic frameworks.
	\item We demonstrate numerically that PPDNA is highly efficient and robust when comparing to ILSA, APG and ADMM, even with the closed-form proximal mapping of the exclusive lasso regularizer. Furthermore, we apply the exclusive lasso model in index ETF and achieve better out-of-sample results, comparing to the lasso and group lasso models.
\end{enumerate}

The remaining parts of the paper are organized as follows.  In Section \ref{sec:pppa}, we propose the preconditioned proximal point algorithm (preconditioned PPA) for solving general 2-block convex composite programming problems. The dual Newton algorithm (DNA) for solving the PPA subproblem is introduced in Section \ref{sec:dna}. In Section \ref{sec:proxJacobian}, we provide a rigorous proof for the closed-form solution to ${\rm Prox}_{\rho\|w \circ \cdot\|_1^2}(\cdot)$, followed by the derivation of the corresponding HS-Jacobian, which is an essential ingredient for designing a semismooth Newton method to solve the dual of the PPA subproblems. In Section \ref{sec:numerical}, we present our numerical results when solving regression problems and classification problems, on both synthetic data and real applications. In the end, we conclude the paper.

\vspace{0.3cm}
\noindent\textbf{Notations and preliminaries:} For any $z\in \mathbb{R}$, ${\rm sign}(z)$ is defined to be $1$ if $z\geq 0$, and $-1$ otherwise. For $x\in \mathbb{R}$, denote $x^{+}=\max\{x,0\}$ and $x^{-}=\min\{x,0\}$. We use ``${\rm Diag}(x)$" to denote the diagonal matrix whose diagonal is given by the vector $x$. For any self-adjoint positive semidefinite linear operator ${\cal M}:\mathbb{R}^n\rightarrow \mathbb{R}^n$, we define $\langle x,x'\rangle_{\cal M}:=\langle x,{\cal M}x'\rangle$, and $\|x\|_{\cal M}:=\sqrt{\langle x,x\rangle_{\cal M}}$ for all $x,x'\in \mathbb{R}^n$. For a given subset ${\cal C}$ of $\mathbb{R}^n$, we denote the weighted distance of $x\in \mathbb{R}^n$ to ${\cal C}$ as ${\rm dist}_{\cal M}(x,{\cal C}):=\inf_{x'\in {\cal C}}\|x-x'\|_{\cal M}$. The largest (smallest) eigenvalue of ${\cal M}$ is denoted as $\lambda_{\max}({\cal M})$ ($\lambda_{\min}({\cal M})$).

For any closed proper convex function $p:\mathbb{R}^n\rightarrow (-\infty,\infty]$, the conjugate function is defined as $p^*(z):=\sup_{x\in\mathbb{R}^n}\{\langle x,z\rangle-p(x)\}$. The Moreau envelope of $p$ at $x$ is defined by
\[
{\rm E}_p(x):=\min_{y\in \mathbb{R}^n}\Big\{ p(y)+\frac{1}{2}\|y-x\|^2\Big\},
\]
and the associated proximal mapping ${\rm Prox}_p(x)$ is defined as the unique solution of the above problem. It is known that $\nabla {\rm E}_p(x)=x-{\rm Prox}_p(x)$ and ${\rm Prox}_p(x)$ is Lipschitz continuous with modulus $1$ \cite{moreau1965proximite,rockafellar1976monotone}.

\section{A preconditioned proximal point algorithm for solving the exclusive lasso problem}
\label{sec:pppa}
The exclusive lasso problem is a special case of the general 2-block convex composite programming problem, which is given as
\begin{equation}
\label{eq: Convex-composite-program}
\min_{x\in \mathbb{R}^n} \Big\{ f(x):= h({\cal A} x)  - \langle c,x\rangle + p(x)\Big\},
\end{equation}
where ${\cal A}: \mathbb{R}^n \to \mathbb{R}^m$ is a linear mapping, $c\in \mathbb{R}^n$, $h: \mathbb{R}^m \to \mathbb{R}$ is a convex loss function, and $p:\mathbb{R}^n\rightarrow (-\infty,+\infty]$ is a closed, proper, convex function. In particular, if $p(\cdot)=\lambda \Delta^{\mathcal{G},w}(\cdot)$, where $\Delta^{\mathcal{G},w}(\cdot)$ is the exclusive lasso regularizer defined in \eqref{eq: exclusive-lasso-regularizer} and $\lambda>0$ is a parameter, \eqref{eq: Convex-composite-program} reduces to the so-called exclusive lasso model.

Define the proximal residual function $R:\mathbb{R}^n\rightarrow \mathbb{R}^n$ by
\begin{align}
R(x)=x-{\rm Prox}_p(x-{\cal A}^* \nabla h({\cal A}x)+c),\quad \forall x\in \mathbb{R}^n,\label{eq:residual}
\end{align}
and the set-valued map ${\cal T}_f(x):=\partial f(x)$. Assume that the solution set $\Omega$ to \eqref{eq: Convex-composite-program} is nonempty. The first order optimality condition of \eqref{eq: Convex-composite-program} implies that $\bar{x}\in \Omega$ if and only if $R(\bar{x})=0$.

The proximal point algorithm (PPA) \cite{rockafellar1976augmented,rockafellar1976monotone} is a well established algorithmic framework for solving convex minimization problems, which is proven to have an asymptotic superlinear convergence rate. Recently, Li et al. extend the classical PPA to the preconditioned proximal point algorithm (preconditioned PPA) in \cite{li2019asymptotically}. In this section, we apply the preconditioned PPA to solve the general 2-block convex composite programming problem.

\subsection{Preconditioned PPA for 2-block convex composite programming problems}
For any starting point $x^0\in \mathbb{R}^n$, the preconditioned PPA generates a sequence $\{x^k\}\subseteq \mathbb{R}^n$ by the following approximate rule for solving \eqref{eq: Convex-composite-program}:
\begin{align}
x^{k+1}\approx {\cal P}_k(x^k)=\arg\min_{x\in\mathbb{R}^n}\Big\{h({\cal A} x)  - \langle c,x\rangle + p(x)+\frac{1}{2\sigma_k}\|x-x^k\|_{{\cal M}_k}^2\Big\},\label{eq:pre_ppa}
\end{align}
where $\{\sigma_k\}$ is a sequence of nondecreasing positive real numbers $(\sigma_k\uparrow \sigma_{\infty}\leq \infty)$ and $\{{\cal M}_k\}$ is a given sequence of self-adjoint positive definite linear operators satisfying:
\[
{\cal M}_k\succeq {\cal M}_{k+1},\quad {\cal M}_k\succeq \lambda_{\min}I_n,\quad \forall k\geq 0,
\]
with some constant $\lambda_{\min}>0$. Observe that when ${\cal M}_k\equiv I_n$ for all $k\geq 0$, the preconditioned PPA reduces to the classical PPA.

To ensure the convergence of the preconditioned PPA, we need the following stopping criteria as proposed in \cite{li2019asymptotically}:
\begin{align}
\|x^{k+1}-{\cal P}_k(x^k)\|_{{\cal M}_k}&\leq \epsilon_k,\quad \epsilon_k \geq 0,\quad \sum_{k=0}^{\infty}\epsilon_k <\infty,
\tag{A}\label{eq:stopA_pre_ppa}\\
\|x^{k+1}-{\cal P}_k(x^k)\|_{{\cal M}_k}&\leq \delta_k\|x^{k+1}-x^k\|_{{\cal M}_k},\quad 0\leq \delta_k < 1,\quad \sum_{k=0}^{\infty}\delta_k <\infty.\tag{B}\label{eq:stopB_pre_ppa}
\end{align}

\subsection{Convergence of the preconditioned PPA}
We adopt the convergence results of the preconditioned PPA, which can be found in \cite{li2019asymptotically}, for the convenience of the readers.
\begin{theorem}\label{thm:convergence_ALM}
	(1) Let $\{x^k\}$ be the sequence generated by the preconditioned PPA \eqref{eq:pre_ppa} with the stopping criterion \eqref{eq:stopA_pre_ppa}. Then $\{x^k\}$ is bounded and
	\[
	{\rm dist}_{{\cal M}_{k+1}}(x^{k+1},\Omega)\leq {\rm dist}_{{\cal M}_{k}}(x^{k},\Omega)+\epsilon_k,\quad \forall k\geq 0,
	\]		
	where $\Omega$ is the solution set of \eqref{eq: Convex-composite-program}. In addition, $\{x^k\}$ converges to some $x^*\in \Omega$.\\
	\noindent(2) Let $r:=\sum_{i=0}^{\infty}\epsilon_k+{\rm dist}_{{\cal M}_0}(x^0,\Omega)$. Assume that for this $r>0$, there exists a constant $\kappa>0$ such that ${\cal T}_f(x)$ satisfies the following error bound assumption
	\begin{align}
	{\rm dist}(x,\Omega)\leq \kappa {\rm dist}(0,{\cal T}_f(x)),\quad \mbox{$\forall x\in \mathbb{R}^n$ satisfying }{\rm dist}(x,\Omega)\leq r. \label{error_bound}
	\end{align}
	Suppose that $\{x^k\}$ is generated by the preconditioned PPA with the stopping criteria \eqref{eq:stopA_pre_ppa} and \eqref{eq:stopB_pre_ppa}. Then it holds for all $k\geq 0$ that
	\begin{equation}\label{eq:rate_pppa}
	{\rm dist}_{{\cal M}_{k+1}}(x^{k+1},\Omega)\leq \mu_k {\rm dist}_{{\cal M}_{k}}(x^{k},\Omega),
	\end{equation}
	where
	\[
	\mu_k=\frac{1}{1-\delta_k}\frac{\delta_k+(1+\delta_k)\kappa\lambda_{\max}({\cal M}_k
		)}{\sqrt{\sigma_k^2+\kappa^2\lambda_{\max}^2(
			{\cal M}_k)}}\rightarrow \mu_{\infty}=\frac{\kappa\lambda_{\infty}}{\sqrt{
			\sigma_{\infty}^2+\kappa^2\lambda_{\infty}^2}}<1,\quad k\rightarrow \infty,
	\]
	where $\lambda_{\infty}=\lim_{k\rightarrow \infty}\lambda_{\max}({\cal M}_k)$. In addition, it holds that for all $k\geq 0$,
	\[
	{\rm dist}(x^{k+1},\Omega)\leq \frac{\mu_k}{\sqrt{\lambda_{\min}({\cal M}_{k+1})}}{\rm dist}_{{\cal M}_{k}}(x^{k},\Omega).
	\]
\end{theorem}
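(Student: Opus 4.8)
The plan is to recognize this as the inexact preconditioned proximal point iteration for the maximal monotone operator ${\cal T}_f=\partial f$, and to reproduce the Rockafellar--Luque style analysis of \cite{rockafellar1976monotone,li2019asymptotically} in the variable metric induced by $\{{\cal M}_k\}$. First I would note that since $h$ is convex, the linear term is affine, and $p$ is closed proper convex, the objective $f$ in \eqref{eq: Convex-composite-program} is closed proper convex, so ${\cal T}_f$ is maximal monotone and its set of zeros coincides with $\Omega$. The key reformulation is to write the exact subproblem \eqref{eq:pre_ppa} as the resolvent ${\cal P}_k(x^k)=({\cal M}_k+\sigma_k{\cal T}_f)^{-1}{\cal M}_k\,x^k$ taken in the ${\cal M}_k$-inner product $\langle\cdot,\cdot\rangle_{{\cal M}_k}$, via its first-order optimality condition $0\in\sigma_k\,{\cal T}_f({\cal P}_k(x^k))+{\cal M}_k({\cal P}_k(x^k)-x^k)$. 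The cornerstone is then that this resolvent is firmly nonexpansive with respect to $\|\cdot\|_{{\cal M}_k}$, which follows from the monotonicity of ${\cal T}_f$ after changing the underlying metric.

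For part (1), I would fix any $\bar x\in\Omega$, which is a fixed point of the resolvent since $0\in{\cal T}_f(\bar x)$. Nonexpansiveness gives $\|{\cal P}_k(x^k)-\bar x\|_{{\cal M}_k}\le\|x^k-\bar x\|_{{\cal M}_k}$; combining this with the triangle inequality and stopping criterion \eqref{eq:stopA_pre_ppa} yields $\|x^{k+1}-\bar x\|_{{\cal M}_k}\le\|x^k-\bar x\|_{{\cal M}_k}+\epsilon_k$. The monotonicity ${\cal M}_{k+1}\preceq{\cal M}_k$ lets me pass to the tighter metric on the left, $\|x^{k+1}-\bar x\|_{{\cal M}_{k+1}}\le\|x^{k+1}-\bar x\|_{{\cal M}_k}$, and taking the infimum over $\bar x\in\Omega$ produces the claimed recursion ${\rm dist}_{{\cal M}_{k+1}}(x^{k+1},\Omega)\le{\rm dist}_{{\cal M}_k}(x^k,\Omega)+\epsilon_k$. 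Since $\sum_k\epsilon_k<\infty$, the sequence $\{{\rm dist}_{{\cal M}_k}(x^k,\Omega)\}$ is bounded and the iterates are quasi-Fej\'er monotone; together with ${\cal M}_k\succeq\lambda_{\min}I_n$ this bounds the Euclidean distances and hence $\{x^k\}$. The firm-nonexpansiveness inequality also yields a summable bound on $\|x^k-{\cal P}_k(x^k)\|_{{\cal M}_k}^2$, so the residual tends to zero; any cluster point is then a zero of ${\cal T}_f$ by closedness of its graph, and quasi-Fej\'er monotonicity upgrades this to convergence of the whole sequence to a single $x^*\in\Omega$.

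For part (2), I would first estimate the residual of the exact step. From the optimality condition, $\sigma_k^{-1}{\cal M}_k(x^k-{\cal P}_k(x^k))\in{\cal T}_f({\cal P}_k(x^k))$, so ${\rm dist}(0,{\cal T}_f({\cal P}_k(x^k)))\le\sigma_k^{-1}\|{\cal M}_k(x^k-{\cal P}_k(x^k))\|$. The error bound \eqref{error_bound}, valid on the ball of radius $r$ that contains the tail of the iterates by part (1), then converts this into a bound on ${\rm dist}({\cal P}_k(x^k),\Omega)$. I would combine three ingredients: firm nonexpansiveness of the resolvent (to control $\|{\cal P}_k(x^k)-\bar x^k\|_{{\cal M}_k}$ against $\|x^k-\bar x^k\|_{{\cal M}_k}={\rm dist}_{{\cal M}_k}(x^k,\Omega)$, where $\bar x^k\in\Omega$ attains the distance), stopping criterion \eqref{eq:stopB_pre_ppa} to absorb the inexactness $\|x^{k+1}-{\cal P}_k(x^k)\|_{{\cal M}_k}\le\delta_k\|x^{k+1}-x^k\|_{{\cal M}_k}$, and the eigenvalue bounds to pass between $\|\cdot\|_{{\cal M}_k}$ and $\|\cdot\|$. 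Careful bookkeeping assembles these into the contraction \eqref{eq:rate_pppa} with the explicit factor $\mu_k$, and taking $k\to\infty$ (using $\delta_k\to0$, $\lambda_{\max}({\cal M}_k)\to\lambda_\infty$, $\sigma_k\to\sigma_\infty$) gives $\mu_\infty<1$. The final Euclidean estimate follows from ${\rm dist}(x^{k+1},\Omega)\le\lambda_{\min}({\cal M}_{k+1})^{-1/2}\,{\rm dist}_{{\cal M}_{k+1}}(x^{k+1},\Omega)$ together with \eqref{eq:rate_pppa}.

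The main obstacle is the rate derivation in part (2): one must track how the variable metric ${\cal M}_k$ interacts with the error bound --- which is stated in the Euclidean norm --- and with the residual estimate, converting norms via $\lambda_{\min}({\cal M}_k)$ and $\lambda_{\max}({\cal M}_k)$ at exactly the right places so that the $\sqrt{\sigma_k^2+\kappa^2\lambda_{\max}^2({\cal M}_k)}$ denominator emerges rather than a cruder bound. The firm-nonexpansiveness estimate must be used in its sharper two-term form, bounding $\|{\cal P}_k(x^k)-\bar x^k\|_{{\cal M}_k}^2$ and the residual simultaneously, to obtain the square-root structure; this is the delicate step, while everything in part (1) is routine Fej\'er-type monotone operator theory.
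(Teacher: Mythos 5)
The paper does not actually prove this theorem: it is quoted verbatim from the reference \cite{li2019asymptotically}, so there is no in-paper argument to compare against. Your outline reconstructs exactly the argument used there (and in Rockafellar's and Luque's original analyses): the subproblem as the resolvent $({\cal M}_k+\sigma_k{\cal T}_f)^{-1}{\cal M}_k$ in the ${\cal M}_k$-inner product, firm nonexpansiveness in $\|\cdot\|_{{\cal M}_k}$ plus criterion \eqref{eq:stopA_pre_ppa} and ${\cal M}_{k+1}\preceq{\cal M}_k$ for the quasi-Fej\'er recursion in part (1), and in part (2) the inclusion $\sigma_k^{-1}{\cal M}_k(x^k-{\cal P}_k(x^k))\in{\cal T}_f({\cal P}_k(x^k))$ fed into the error bound together with the two-term (Pythagorean) form of firm nonexpansiveness, which is indeed what produces the $\sqrt{\sigma_k^2+\kappa^2\lambda_{\max}^2({\cal M}_k)}$ denominator. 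The plan is correct in structure and identifies the genuinely delicate step; the only thing left undone is the explicit bookkeeping that absorbs the inexactness via \eqref{eq:stopB_pre_ppa} into the stated $\mu_k$, but the ingredients you list assemble into it in the standard way.
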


The convergence rate of the preconditioned PPA relies on the error bound conditions of ${\cal T}_f$. The following proposition can be used to establish error bound conditions for many commonly used loss function plus piecewise linear-quadratic regularizer, which is an application of \cite[Theorem 2]{zhou2017unified}.
\begin{proposition}\label{prop:luotseng}
	Assume that $\Omega$ is non-empty and compact.
	Suppose that (1) $h$ is continuously differentiable on $\mathbb{R}^m$ and strongly convex on any compact convex set in $\mathbb{R}^m$; (2) $p(\cdot)$ is a piecewise linear-quadratic convex function. Then  for any $\xi\geq \inf_{x\in \mathbb{R}^n}f(x)$, there exist constants $\kappa,\varepsilon>0$ such that
	\begin{align*}
	{\rm dist}(x,\Omega)\leq \kappa \|R(x)\|\mbox{ \quad  for all $x\in \mathbb{R}^n$ with $f(x)\leq \xi$, $\|R(x)\|\leq \varepsilon$},
	\end{align*}
	where $R(x)$ is defined in  \eqref{eq:residual}.
\end{proposition}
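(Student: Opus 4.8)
The plan is to recognize \eqref{eq: Convex-composite-program} as an instance of the structured convex program treated in \cite[Theorem 2]{zhou2017unified} and to verify its hypotheses one at a time. Writing the smooth part as $\phi(x):=h({\cal A}x)-\langle c,x\rangle$, we have $\nabla\phi(x)={\cal A}^*\nabla h({\cal A}x)-c$, so the residual in \eqref{eq:residual} becomes $R(x)=x-{\rm Prox}_p\big(x-\nabla\phi(x)\big)$, i.e. precisely the natural proximal-gradient residual of $\min_x\{\phi(x)+p(x)\}$. The asserted inequality ${\rm dist}(x,\Omega)\le\kappa\|R(x)\|$ is then exactly the Luo--Tseng error bound for this composite problem, and the entire argument reduces to checking that $\phi$ and $p$ fit the class for which \cite{zhou2017unified} guarantees such a bound.

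First I would reduce everything to a fixed compact set. Since $f$ is a closed proper convex function whose minimizer set $\Omega$ is nonempty and compact, its recession function is strictly positive in every nonzero direction, and hence every sublevel set ${\cal L}_\xi:=\{x:f(x)\le\xi\}$ is compact. It therefore suffices to establish the bound on $\mathrm{conv}\,{\cal L}_\xi$, whose image ${\cal A}(\mathrm{conv}\,{\cal L}_\xi)$ is a compact convex subset of $\mathbb{R}^m$. On this image, Assumption (1) gives that $h$ is continuously differentiable, strongly convex, and has Lipschitz continuous gradient. This is the regularity of the smooth data demanded by \cite{zhou2017unified}; in particular it forces ${\cal A}\bar x$ and $\nabla h({\cal A}\bar x)$ to be invariant over $\bar x\in\Omega$, which is the standard starting point of the Luo--Tseng analysis.

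Next I would bring in the structural assumption on the regularizer. Assumption (2) says $p$ is piecewise linear-quadratic, so its graph and its subdifferential are polyhedral, its proximal mapping is piecewise affine, and the relevant stationarity multifunction is calm (equivalently, metrically subregular) by Robinson's polyhedral multifunction theorem. This is exactly the ingredient that \cite{zhou2017unified} isolates as sufficient for the ``second-order'' building block of their unified framework. Combining this with the localized strong convexity and Lipschitz smoothness of the smooth part places \eqref{eq: Convex-composite-program} squarely within the scope of \cite[Theorem 2]{zhou2017unified}, and applying that theorem on ${\cal L}_\xi$ produces constants $\kappa,\varepsilon>0$ with ${\rm dist}(x,\Omega)\le\kappa\|R(x)\|$ for all $x$ satisfying $f(x)\le\xi$ and $\|R(x)\|\le\varepsilon$, which is the claim.

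The hard part is not the analysis of the smooth part but the matching of hypotheses, and in particular confirming that a \emph{general} piecewise linear-quadratic convex $p$ meets the precise technical condition imposed in \cite{zhou2017unified}. The step needing most care is thus the verification that the polyhedrality of the subdifferential graph of a PLQ function yields exactly the metric subregularity (Hoffman-type error bound) that their result presupposes, and that the compactness of $\Omega$ indeed supplies the level-boundedness built into their statement. These two checks, together with the reduction to ${\cal L}_\xi$ in the second paragraph, are the points I would write out in full; everything else follows by directly quoting \cite[Theorem 2]{zhou2017unified}.
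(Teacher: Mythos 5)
Your proposal follows essentially the same route as the paper: both reduce the claim to \cite[Theorem 2 and Corollary 1]{zhou2017unified}, using the invariance of ${\cal A}\bar x$ and $\nabla h({\cal A}\bar x)$ over $\Omega$, the polyhedrality of $\partial p$ and $\partial p^*$ for a piecewise linear-quadratic $p$, and Robinson's polyhedral-multifunction theorem to get the calmness/metric subregularity needed there. When writing it out, also record the one hypothesis you left implicit: the bounded linear regularity of the collection $\{\Gamma_h(\bar y),\Gamma_p(\bar g)\}$, which the paper obtains from \cite[Corollary 3]{bauschke1999strong} since both sets are closed convex polyhedra.
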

\begin{proof}
	From \cite[Proposition 1]{zhou2017unified}, we know that that there exists a $\bar{y}\in \mathbb{R}^m$ such that
	\[
	{\cal A}x=\bar{y},\quad {\cal A}^* \nabla h({\cal A}x)-c =\bar{g},\quad \forall x\in \Omega,
	\]
	where $\bar{g}={\cal A}^* \nabla h(\bar{y})-c$. Consider the collection ${\cal C}:=\{\Gamma_h(\bar{y}),\Gamma_p(\bar{g})\}$, where
	\begin{align*}
	\Gamma_h(y):=\{x\in \mathbb{R}^n \mid {\cal A}x=y\},\quad \Gamma_p(g):=\{x\in \mathbb{R}^n \mid -g\in\partial p(x)\}.
	\end{align*}
	Since $\Gamma_h(\bar{y})$ is the set of solutions to a linear system, it is a polyhedral closed convex set. According to \cite[Corollary 23.5.1]{rockafellar1970convex},
	\begin{align*}
	\Gamma_p(\bar{g}):=\{x\in \mathbb{R}^n\mid -\bar{g}\in\partial p(x)\}=\{x\in \mathbb{R}^n\mid x\in\partial p^*(-\bar{g})\}=\partial p^*(-\bar{g}).
	\end{align*}
	Since $p$ is piecewise linear-quadratic, $p^*$ is also piecewise linear-quadratic by \cite[Theorem 11.14(b)]{rockafellar2009variational}. Thus $\partial p$ and $\partial p^*$ are both polyhedral due to \cite[Proposition 10.21]{rockafellar2009variational}. Therefore, $\Gamma_h(\bar{y})$ and $\Gamma_p(\bar{g})$ are closed convex polyhedral sets. By \cite[Corollary 3]{bauschke1999strong}, we know that ${\cal C}$ is boundedly linearly regular. Since $\partial p^*$ is a polyhedral multi-function, we can see from \cite[Proposition 3H.1]{dontchev2009implicit} that $\partial p^*$ is calm at $-\bar{g}$ for any $\bar{x}\in \Omega$, thus $\partial p=(\partial p^*)^{-1}$ is metrically subregular at $\bar{x}$ for $-\bar{g}$ by \cite[Theorem 3H.3]{dontchev2009implicit}. Therefore, by \cite[Theorem 2]{zhou2017unified}, the solution map $\Gamma(y,g):=\{x\in \mathbb{R}^n\mid {\cal A}x=y,-g\in \partial p(x)\}$ is calm at $(\bar{y},\bar{g})$ for any $\bar{x}\in\Omega$. Then the desired conclusion holds by \cite[Corollary 1]{zhou2017unified}.
\end{proof}

For the exclusive lasso regularized models, we can see from the next proposition that the error bound assumption \eqref{error_bound} holds for the linear regression problem and the logistic regression problem, which means that the preconditioned PPA can be expected to have fast linear convergence when the parameters $\{\sigma_k\}$ are large.
\begin{proposition}
	Assume that in \eqref{eq: Convex-composite-program}, $p(\cdot)=\lambda \Delta^{\mathcal{G},w}(\cdot)$. Then the error bound assumption \eqref{error_bound} holds in the following two cases:
	\begin{itemize}
		\item[(1)] $h(y)=\sum_{i=1}^m (y_i-b_i)^2/2$, for some given vector $b\in \mathbb{R}^m$;
		\item[(2)] $h(y) = \sum_{i=1}^m \log(1 + \exp(-b_i y_i))$, for some given vector $b\in \{-1,1\}^m$.
	\end{itemize}
\end{proposition}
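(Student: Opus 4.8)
The plan is to deduce both cases from Proposition \ref{prop:luotseng}, whose conclusion is an error bound phrased through the proximal residual $R(\cdot)$ of \eqref{eq:residual}, and then to convert that residual-based bound into the subdifferential-based bound \eqref{error_bound}. The first task is to check the hypotheses of Proposition \ref{prop:luotseng}. For the regularizer $p(\cdot)=\lambda\Delta^{\mathcal{G},w}(\cdot)$ I would argue that it is convex and piecewise linear-quadratic: each summand $\|w_{g_i}\circ x_{g_i}\|_1^2$ is the square of a weighted $\ell_1$-norm, hence convex as a nondecreasing convex function of a nonnegative convex function; and on each orthant of $\mathbb{R}^n$ every inner norm $\|w_{g_i}\circ x_{g_i}\|_1$ is linear, so $p$ is quadratic there, while the $2^n$ orthants form a finite polyhedral partition. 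Thus $p$ is convex piecewise linear-quadratic.

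Next I would verify the loss hypotheses. In case (1), $h(y)=\tfrac12\|y-b\|^2$ is smooth with $\nabla h(y)=y-b$ and is $1$-strongly convex on all of $\mathbb{R}^m$, hence on every compact convex set. In case (2), each term $\log(1+\exp(-b_iy_i))$ is smooth, and since $b_i^2=1$ its second derivative equals $\sigma(-b_iy_i)\bigl(1-\sigma(-b_iy_i)\bigr)$ with $\sigma$ the logistic sigmoid; this is strictly positive and, on any set where $|y_i|$ is bounded, is bounded below by a positive constant, so $h$ is strongly convex on every compact convex set though not globally. I would also record that $\Omega$ is compact: since $w\in\mathbb{R}_{++}^n$, we have $\Delta^{\mathcal{G},w}(x)\ge c_0\sum_{i=1}^l\|x_{g_i}\|_1^2$ for some $c_0>0$, so $p$ is coercive and dominates the linear term $-\langle c,x\rangle$ while $h\ge 0$ in both cases; hence $f$ is coercive and $\Omega$ is nonempty and compact. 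Proposition \ref{prop:luotseng} then supplies, for a chosen level $\xi$, constants $\kappa,\varepsilon>0$ with ${\rm dist}(x,\Omega)\le\kappa\|R(x)\|$ whenever $f(x)\le\xi$ and $\|R(x)\|\le\varepsilon$.

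The heart of the argument is to replace $\|R(x)\|$ by ${\rm dist}(0,{\cal T}_f(x))$. Writing $g(x):=h({\cal A}x)-\langle c,x\rangle$ so that $\nabla g(x)={\cal A}^*\nabla h({\cal A}x)-c$ and $\partial f(x)=\nabla g(x)+\partial p(x)$, I would use the elementary equivalence $u\in\partial p(x)\iff x={\rm Prox}_p(x+u)$. For any $v\in\partial f(x)$ we have $v-\nabla g(x)\in\partial p(x)$, hence $x={\rm Prox}_p(x+v-\nabla g(x))$, while by definition ${\rm Prox}_p(x-\nabla g(x))=x-R(x)$. Since ${\rm Prox}_p$ is $1$-Lipschitz,
\[
\|R(x)\|=\|{\rm Prox}_p(x+v-\nabla g(x))-{\rm Prox}_p(x-\nabla g(x))\|\le\|v\|,
\]
and taking the infimum over $v\in\partial f(x)$ (nonempty since $f$ is finite-valued and convex) gives $\|R(x)\|\le{\rm dist}(0,{\cal T}_f(x))$.

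Finally I would match the two neighbourhoods. Given the radius $r$ from Theorem \ref{thm:convergence_ALM}, the set $\{x:{\rm dist}(x,\Omega)\le r\}$ is compact, so $f$ attains a finite maximum $\xi$ there; fix the corresponding $\kappa,\varepsilon$. For $x$ with ${\rm dist}(x,\Omega)\le r$ I would split into two cases: if $\|R(x)\|\le\varepsilon$ then $f(x)\le\xi$ lets me invoke Proposition \ref{prop:luotseng} together with the previous bound to get ${\rm dist}(x,\Omega)\le\kappa\|R(x)\|\le\kappa\,{\rm dist}(0,{\cal T}_f(x))$; if instead $\|R(x)\|>\varepsilon$ then ${\rm dist}(0,{\cal T}_f(x))\ge\|R(x)\|>\varepsilon$, so ${\rm dist}(x,\Omega)\le r\le(r/\varepsilon)\,{\rm dist}(0,{\cal T}_f(x))$. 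Setting $\kappa':=\max\{\kappa,r/\varepsilon\}$ establishes \eqref{error_bound}. The step I expect to require the most care is this region-matching coupled with the residual-to-subdifferential conversion, because Proposition \ref{prop:luotseng} is stated on a sublevel/small-residual region whereas \eqref{error_bound} is demanded on a distance ball; the case split above is exactly what reconciles the two.
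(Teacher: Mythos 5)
Your proof is correct. For case (2) it is essentially identical to the paper's argument: coercivity gives compactness of $\Omega$, Proposition \ref{prop:luotseng} gives the residual bound on a sublevel set, the $1$-Lipschitz property of ${\rm Prox}_p$ converts $\|R(x)\|\le{\rm dist}(0,{\cal T}_f(x))$, and the same two-case split on $\|R(x)\|\lessgtr\varepsilon$ reconciles the regions. Where you genuinely diverge is case (1): the paper does not invoke Proposition \ref{prop:luotseng} there at all, but instead observes that with a quadratic loss the whole objective $f$ is convex piecewise linear-quadratic, so ${\cal T}_f=\partial f$ is a piecewise polyhedral multifunction (citing Sun's thesis), and such multifunctions automatically satisfy the error bound by Robinson's upper Lipschitz continuity result. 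That route is shorter and, notably, does not require $\Omega$ to be compact, whereas your unified treatment forces you to establish coercivity of $f$ (which you do correctly via $w\in\mathbb{R}^n_{++}$ and the quadratic growth of $\Delta^{\mathcal{G},w}$) in order to meet the compactness hypothesis of Proposition \ref{prop:luotseng}. The trade-off is that your argument handles both losses with one mechanism and makes the residual-to-subdifferential conversion explicit, at the cost of an extra hypothesis check; the paper's is more economical for the polyhedral case but needs a separate argument for the logistic case anyway. Both are valid proofs of the stated proposition.
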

\begin{proof}
	(1) When $h(y)=\sum_{i=1}^m (y_i-b_i)^2/2$, $f(\cdot)$ is a piecewise linear-quadratic convex function, from \cite{sun1986monotropic}, ${\cal T}_f(\cdot)$ is piecewise polyhedral, thus it satisfies the error bound assumption \eqref{error_bound} \cite{li2019asymptotically,robinson1981some}.
	
	(2) When $h(y) = \sum_{i=1}^m \log(1 + \exp(-b_i y_i))$, since $f(\cdot)$ is nonnegative, and $f(x)\rightarrow +\infty$ as $\|x\|\rightarrow +\infty$, $\Omega$ is non-empty and compact. Given $r>0$, define $\Omega_r:=\{x\in \mathbb{R}^n\mid {\rm dist}(x,\Omega)\leq r\}$. Due to the fact that $\Omega$ is compact, $\Omega_r$ is compact and thus $\xi:=\max_{x\in \Omega_r}f(x)$ is finite. From Proposition \ref{prop:luotseng}, we know that for this $\xi$, there exist constants $\kappa,\varepsilon>0$ such that
	\begin{align}
	{\rm dist}(x,\Omega)\leq \kappa \|R(x)\|\mbox{ \quad  for all $x\in\mathbb{R}^n$ with $f(x)\leq \xi$, $\|R(x)\|\leq \varepsilon$},\label{eq:luocondition}
	\end{align}
	where $R(x)$ is defined as in \eqref{eq:residual}. We consider two cases:\\
	Case 1: $x\in \Omega_r$ and $\|R(x)\|\leq \varepsilon$. From \eqref{eq:luocondition}, we have ${\rm dist}(x,\Omega)\leq \kappa \|R(x)\|$.\\
	Case 2: $x\in \Omega_r$ and $\|R(x)\|> \varepsilon$. Then ${\rm dist}(x,\Omega)\leq (r/\varepsilon)\varepsilon \leq (r/\varepsilon) \|R(x)\|$.
	
	Therefore, it holds that
	\[
	{\rm dist}(x,\Omega)\leq \max\{\kappa,(r/\varepsilon)\} \|R(x)\|,\quad \forall x\in \Omega_r.
	\]
	Next, we follow the ideas in \cite[Theorem 3.1]{dong2009extension} and \cite[Proposition 2.4]{cui2016asymptotic}. Let $y\in {\cal T}_f(x)$, which means that $y\in  {\cal A}^* \nabla h({\cal A}x)-c+\partial p(x)$, or equivalently
	$x={\rm Prox}_p(x+y- {\cal A}^* \nabla h({\cal A}x)+c)$. Thus
	\begin{align*}
	\|R(x)\|=\|{\rm Prox}_p(x-{\cal A}^* \nabla h({\cal A}x)+c)-{\rm Prox}_p(x+y-{\cal A}^* \nabla h({\cal A}x)+c)\|\leq \|y\|.
	\end{align*}
	As a result,
	\[
	{\rm dist}(x,\Omega)\leq \max\{\kappa,(r/\varepsilon)\} \|y\|,\quad \forall y\in {\cal T}_f(x)\mbox{ and }
	x\in \Omega_r.
	\]
	Therefore,
	$
	{\rm dist}(x,\Omega)\leq \max\{\kappa,(r/\varepsilon)\} {\rm dist}(0,{\cal T}_f(x)),\; \forall x\in \Omega_r.$
\end{proof}

Note that the key challenge in executing the preconditioned PPA is whether the nonsmooth problem \eqref{eq:pre_ppa} can be solved efficiently. We consider two special cases. The first case is ${\cal M}_k\equiv I_n$ for all $k\geq 0$, and the other is ${\cal M}_k\equiv I_n+\tau {\cal A}^*{\cal A}$, where $\tau>0$ is a given positive number. To efficiently solve the preconditioned PPA subproblems, we design a dual Newton algorithm (DNA) to solve \eqref{eq:pre_ppa}, where the algorithm is superlinearly (or even quadratically) convergent when the functions $h(\cdot)$ and $p(\cdot)$ in \eqref{eq:pre_ppa} satisfy suitable conditions that are stated later in Theorem \ref{thm:convergence_SSN}.

\section{A dual Newton algorithm for solving the preconditioned PPA subproblem}
\label{sec:dna}
For all $k\geq 0$, we aim to solve the preconditioned PPA subproblem
\begin{equation}\label{eq:sub_pppa}
\min_{x\in \mathbb{R}^n}\Big\{ f_k(x):=h({\cal A} x)  - \langle c,x\rangle + p(x)+\frac{1}{2\sigma_k}\|x-x^k\|_{{\cal M}_k}^2\Big\}.
\end{equation}
Obviously, $f_k(\cdot)$ is a strongly convex, nonsmooth function, which is not necessarily Lipschitz. Thus the above minimization problem admits a unique solution $\bar{x}^{k+1}$. The main point is how one can solve \eqref{eq:sub_pppa} in a fast and robust way. Our choice is the dual Newton algorithm (DNA) as already explained at the end of last section.

\subsection{The case when ${\cal M}_k\equiv I_n$}
In this classical case, one can write \eqref{eq:sub_pppa} equivalently as
\begin{equation}\label{eq:case1_P}
\min_{x\in \mathbb{R}^n,y\in \mathbb{R}^m}\Big\{ h(y)  - \langle c,x\rangle + p(x)+\frac{1}{2\sigma_k}\|x-x^k\|^2\mid {\cal A}x-y=0\Big\}.
\end{equation}
The dual of the above problem, after ignoring the constant term, is
\begin{align}\label{eq:case1_D}
\max_{u\in \mathbb{R}^n} \Big\{\phi_k(u):=-h^*(u) - \frac{1}{2\sigma_k}\|x^k+\sigma_k c-\sigma_k {\cal A}^*u\|^2 +\frac{1}{\sigma_k}{\rm E}_{\sigma_k p}(x^k+\sigma_k c-\sigma_k{\cal A}^*u)\Big\}.
\end{align}
Suppose that the following assumption holds for $h^*$.
\begin{assumption}\label{ass_fstar}
	$h^*(\cdot)$ is twice continuously differentiable and strongly convex with modulus $\alpha_h$ in ${\rm int}({\rm dom}(h^*))$.
\end{assumption}
Note that when we consider the least squares loss function $h(y)=\sum_{i=1}^m (y_i-b_i)^2/2$, Assumption \ref{ass_fstar} holds with $\alpha_h = 1$. Under Assumption \ref{ass_fstar}, we can see that $\phi_k(\cdot)$ is strongly concave, thus \eqref{eq:case1_D} has a unique optimal solution $\bar{u}^{k+1}$, and $\bar{x}^{k+1}$ can be obtained by
\[
\bar{x}^{k+1}={\rm Prox}_{\sigma_k p}(x^k+\sigma_k c-\sigma_k {\cal A}^*\bar{u}^{k+1}).
\]

Now we can give the full description of the preconditioned PPA with subproblems solved by the DNA in Algorithm \ref{alg:ppdna}.
\begin{algorithm}[!h]\small
	\caption{Dual Newton based preconditioned PPA (PPDNA) for \eqref{eq: Convex-composite-program}}
	\label{alg:ppdna}
	\begin{algorithmic}
		\STATE {\bfseries Initialization:} Choose $x^0\in \mathbb{R}^n$, $\sigma_0 > 0$. For $k = 0, 1, 2, \dots$
		\REPEAT
		\STATE {\bfseries Step 1}. Compute
		\begin{align}
		u^{k+1} &\approx \arg\max \phi_{k}(u),\label{eq:solve_u}\\
		x^{k+1} &= {\rm Prox}_{\sigma_k p}(x^k+\sigma_k c-\sigma_k {\cal A}^*u^{k+1}),
		\end{align}
		where $\phi_k(\cdot)$ is defined as in \eqref{eq:case1_D}, to satisfy the stopping criteria \eqref{eq:stopA_pre_ppa} and \eqref{eq:stopB_pre_ppa}.
		\\[3pt]
		\STATE {\bfseries Step 2}. Update $\sigma_{k+1} \uparrow \sigma_{\infty} \leq \infty$.
		\UNTIL{Stopping criterion is satisfied.}
	\end{algorithmic}
\end{algorithm}

As one can see in the algorithm, we need the implementations of the stopping criteria \eqref{eq:stopA_pre_ppa} and \eqref{eq:stopB_pre_ppa} associated with $u^{k+1}$ and $x^{k+1}$. By the discussions in \cite{li2018highly,luque1984asymptotic,rockafellar1976augmented}, the stopping criteria \eqref{eq:stopA_pre_ppa} and \eqref{eq:stopB_pre_ppa} can be achieved by the following implementable criteria when Assumption \ref{ass_fstar} holds:
\begin{align*}
\|\nabla \phi_k(u^{k+1})\|&\leq \sqrt{\alpha_h/\sigma_k}\epsilon_k,\quad \epsilon_k \geq 0,\quad \sum_{k=0}^{\infty}\epsilon_k <\infty,\tag{A'}\label{eq:stopA'_pre_ppa}\\
\|\nabla \phi_k(u^{k+1})\|&\leq \sqrt{\alpha_h/\sigma_k} \delta_k \|x^{k+1}-x^k\|,\quad 0\leq \delta_k < 1,\quad \sum_{k=0}^{\infty}\delta_k <\infty,\tag{B'}\label{eq:stopB'_pre_ppa}
\end{align*}
where $\nabla \phi_k(u) =  -\nabla h^{*}(u) + {\cal A}  {\rm Prox}_{\sigma_k p}(x^k + \sigma_k c- \sigma_k {\cal A}^{*}u)$.

Next we discuss how to solve \eqref{eq:solve_u} in Algorithm \ref{alg:ppdna}. For fixed $\sigma>0$, $\tilde{x}\in \mathbb{R}^n$, we aim to solve
\[
\min_{u\in \mathbb{R}^n}\Big\{ \phi(u):=-h^*(u) - \frac{1}{2\sigma}\|\tilde{x}+\sigma c-\sigma {\cal A}^*u\|^2 +\frac{1}{\sigma}{\rm E}_{\sigma p}(\tilde{x}+\sigma c-\sigma {\cal A}^*u)\Big\}.
\]
Since $\phi(\cdot)$ is continuously differentiable, it is equivalent to solving the nonsmooth equation
\begin{equation}\label{eq: newton-system}
\nabla\phi(u)=-\nabla h^{*}(u) + {\cal A}  {\rm Prox}_{\sigma p}(\tilde{x} + \sigma c- \sigma{\cal A}^{*}u).
\end{equation}
Note that $\nabla \phi(\cdot)$ is Lipschitz continuous, but not differentiable. Due to the quadratic convergence  of Newton's method, it is usually the first choice for solving a nonlinear equation. However, the direct application of Newton's method to \eqref{eq: newton-system} is infeasible since the function $\nabla \phi(\cdot)$ is nonsmooth. Fortunately, the semismooth version of the Newton's method has been established in \cite{kummer1988newton,qi1993nonsmooth}. This allows us to solve \eqref{eq: newton-system} by a semismooth Newton method (SSN), which has at least superlinear convergence. The concept of semismoothness can be found in the supplementary materials.

We now derive the generalized Jacobian of the Lipschitz continuous function $\nabla\phi(\cdot)$. For given $u$, the following set-valued map is well defined:
\[
\hat{\partial}^2 \phi(u)  :=  -\nabla^2 h^{*}(u) - \sigma{\cal A}\partial{\rm Prox}_{\sigma p}(\tilde{x} + \sigma c - \sigma{\cal A}^{*}u){\cal A}^{*},
\]
where $\partial {\rm Prox}_{\sigma p}(\tilde{x} + \sigma c- \sigma{\cal A}^{*}u)$ is the  generalized Jacobian of the Lipschitz continuous mapping  ${\rm Prox}_{\sigma p}(\cdot)$  at $\tilde{x} + \sigma c- \sigma{\cal A}^{*}u$. Then we can treat $\hat{\partial}^2 \phi(u)$ as the surrogate generalized Jacobian of $\nabla\phi(\cdot)$ at $u$.

Now we present our semismooth Newton (SSN) method in Algorithm \ref{alg:ssn} for solving \eqref{eq: newton-system}, which can be expected to get at least a superlinear (or even quadratic) convergence rate.
\begin{algorithm}[!h]\small
	\caption{Semismooth Newton method (SSN) for \eqref{eq: newton-system}}
	\label{alg:ssn}
	\begin{algorithmic}
		\STATE {\bfseries Initialization:} Given $u^{0} \in  {\rm int}({\rm dom}(h^*))$, $\mu \in (0, 1/2)$, $\tau \in (0, 1]$, and $\bar{\gamma}, \delta \in (0, 1)$. For $j = 0, 1, \dots$
		\REPEAT
		\STATE {\bfseries Step 1}. Select an element ${\cal H}_j \in \hat{\partial}^{2} \phi(u^j)$. Apply the direct method or the conjugate gradient (CG) method to find an approximate solution $d^j \in \mathbb{R}^m$ to
		\begin{equation}\label{eq: cg-system}
		{\cal H}_j(d^j) \approx - \nabla \phi(u^j)
		\end{equation}
		such that $\|{\cal H}_j(d^j) + \nabla\phi(u^j)\| \leq \min(\bar{\gamma}, \|\nabla\phi(u^j)\|^{1+\tau})$.
		\\[3pt]
		\STATE {\bfseries Step 2}. Set $\alpha_j = \delta^{m_j}$, where $m_j$ is the smallest nonnegative integer $m$ for which
		$$\phi(u^j + \delta^m d^j) \leq \phi(u^j) + \mu\delta^m \langle \nabla\phi(u^j), d^j \rangle .$$
		\\[3pt]
		\STATE{\bfseries Step 3}. Set $u^{j+1} = u^j + \alpha_j d^j$.
		\UNTIL{Stopping criterion based on $u^{j+1}$ is satisfied.}
	\end{algorithmic}
\end{algorithm}
Theorem \ref{thm:convergence_SSN} gives the convergence result of the SSN method.
\begin{theorem} \label{thm:convergence_SSN}
	Supppose that Assumption \ref{ass_fstar} holds. Assume that for any $\sigma >0$, ${\rm Prox}_{\sigma p} (\cdot)$ is strongly semismooth with respect to $\partial{\rm Prox}_{\sigma p} (\cdot)$. Let $\{u^j\}$ be the sequence generated by Algorithm \ref{alg:ssn}. Then $\{u^j\}$ converges to the unique solution $\bar{u}$ of the problem \eqref{eq: newton-system}, and for $j$ sufficiently large,
	\[
	\|u^{j+1} - \bar{u}\| = O(\|u^{j} - \bar{u}\|^{1+\tau}),
	\]
	where $\tau \in (0, 1]$ is given in the algorithm.
\end{theorem}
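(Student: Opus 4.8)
The plan is to recognize Theorem \ref{thm:convergence_SSN} as an instance of the global-plus-local convergence theory for the (inexact, line-searched) semismooth Newton method, as developed in \cite{kummer1988newton,qi1993nonsmooth} and used in this style in \cite{li2018highly}. That theory delivers both global convergence to the unique zero $\bar{u}$ of $\nabla\phi$ and the local rate of order $1+\tau$, provided two structural facts hold: (i) the residual map $\nabla\phi(\cdot)$ is strongly semismooth with respect to the surrogate generalized Jacobian $\hat{\partial}^2\phi(\cdot)$, and (ii) every element of $\hat{\partial}^2\phi(\bar{u})$ is nonsingular. Existence and uniqueness of $\bar{u}$ are already available, since under Assumption \ref{ass_fstar} the function $\phi(\cdot)$ is strongly concave and hence has a unique maximizer, which is the unique solution of $\nabla\phi(u)=0$ in \eqref{eq: newton-system}. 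So the real work is to verify (i) and (ii), after which the convergence statements follow from the cited framework.

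For (i), I would decompose $\nabla\phi(u) = -\nabla h^*(u) + \mathcal{A}\,{\rm Prox}_{\sigma p}(\tilde{x}+\sigma c-\sigma\mathcal{A}^*u)$ and treat the two summands separately. By Assumption \ref{ass_fstar}, $\nabla h^*$ is continuously differentiable on ${\rm int}({\rm dom}(h^*))$ and is therefore strongly semismooth with respect to $\nabla^2 h^*$. The map $u\mapsto \tilde{x}+\sigma c-\sigma\mathcal{A}^*u$ is affine, and by hypothesis ${\rm Prox}_{\sigma p}(\cdot)$ is strongly semismooth with respect to $\partial{\rm Prox}_{\sigma p}(\cdot)$; composing with the affine map and the linear map $\mathcal{A}$ (using the chain rule for strongly semismooth functions) shows that the second summand is strongly semismooth with respect to $-\sigma\mathcal{A}\,\partial{\rm Prox}_{\sigma p}(\cdot)\mathcal{A}^*$. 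Summing the two pieces gives strong semismoothness of $\nabla\phi$ with respect to $\hat{\partial}^2\phi$.

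For (ii), which I expect to be the crux, I would show that every $\mathcal{H}\in\hat{\partial}^2\phi(\bar{u})$ is negative definite. The term $-\nabla^2 h^*(\bar{u})$ satisfies $-\nabla^2 h^*(\bar{u}) \preceq -\alpha_h I_m \prec 0$ by the strong convexity of $h^*$ with modulus $\alpha_h$ in Assumption \ref{ass_fstar}. For the second term, I would use that ${\rm Prox}_{\sigma p}$ is the proximal mapping of a closed proper convex function, so that each $V\in\partial{\rm Prox}_{\sigma p}(\tilde{x}+\sigma c-\sigma\mathcal{A}^*\bar{u})$ is symmetric positive semidefinite with $\|V\|\le 1$; hence $-\sigma\mathcal{A}V\mathcal{A}^*\preceq 0$. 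Adding the two pieces yields $\mathcal{H}\preceq -\alpha_h I_m\prec 0$, so $\mathcal{H}$ is invertible and the Newton system \eqref{eq: cg-system} is well posed, with $d^j$ a genuine ascent direction for the line search.

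With (i) and (ii) established, the conclusion follows in two stages. Global convergence of $\{u^j\}$ to $\bar{u}$ comes from the strong concavity of $\phi$ (compact level sets, unique maximizer) together with the Armijo-type line search in Step 2, which forces a monotone improvement and drives $\|\nabla\phi(u^j)\|\to 0$. For the local rate, the nonsingularity from (ii) and the semismooth Newton analysis guarantee that the unit step is eventually accepted; combining the strong-semismoothness expansion of $\nabla\phi$ at $\bar{u}$ (an $O(\|u^j-\bar{u}\|^2)$ term) with the inexactness tolerance $\|\mathcal{H}_j d^j+\nabla\phi(u^j)\|\le\|\nabla\phi(u^j)\|^{1+\tau}=O(\|u^j-\bar{u}\|^{1+\tau})$ imposed in Step 1 yields $\|u^{j+1}-\bar{u}\|=O(\|u^j-\bar{u}\|^{1+\tau})$. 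The main obstacle is the positive semidefiniteness and norm bound on the elements of $\partial{\rm Prox}_{\sigma p}$ needed for (ii); everything else is a routine instantiation of the standard semismooth Newton convergence theory.
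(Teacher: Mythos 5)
Your proposal is correct and follows essentially the same route as the paper: the paper's proof likewise establishes that every element of $\hat{\partial}^2\phi(u)$ satisfies $V\preceq -\alpha_h I$ (negative definiteness from the strong convexity of $h^*$ plus positive semidefiniteness of the generalized Jacobian of the proximal mapping), and then invokes the standard global and local convergence results for the inexact line-searched semismooth Newton method (citing \cite[Proposition 3.3 and Theorem 3.4]{zhao2010newton} for global convergence and \cite[Theorem 3]{li2018efficiently} for the rate of order $1+\tau$). Your write-up simply fleshes out the strong semismoothness of $\nabla\phi$ and the acceptance of the unit step, which the paper leaves to the cited references.
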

\begin{proof}
	Due to the strong convexity of $h^*$, all the elements in $\hat{\partial}^2\phi(u)$ for all $u$ are negative definite, and $V\preceq -\alpha_h I$ for any $V\in\hat{\partial}^2\phi(u)$ with $\alpha_h$ given in Assumption \ref{ass_fstar}. By \cite[Proposition 3.3 and Theorem 3.4]{zhao2010newton}, we can see that $\{u^j\}$ converges to the unique solution $\bar{u}$. Then by mimicking the proof of \cite[Theorem 3]{li2018efficiently}, we can get the convergence rate of   $\{u^j\}$.
\end{proof}

\begin{remark}
	For the case of $p(\cdot)=\lambda \Delta^{\mathcal{G},w}(\cdot)$, it will be proved in the next section that ${\rm Prox}_{\sigma p}(\cdot)$ is strongly semismooth with respect to $\partial_{\rm HS} {\rm Prox}_{\sigma p}(\cdot)$, where $\partial_{\rm HS} {\rm Prox}_{\sigma p}(\cdot)$ is the HS-Jacobian of ${\rm Prox}_{\sigma p}(\cdot)$.
\end{remark}

We should emphasize that the efficiency in computing the Newton direction in \eqref{eq: cg-system} depends critically on exploiting the sparsity structure of the generalized Jacobian of ${\rm Prox}_{\sigma p}(\cdot)$. The case of $p(\cdot)=\lambda \Delta^{\mathcal{G},w}(\cdot)$ will be discussed in the next section, where an important property called the second-order sparsity is carefully treated in the implementation.

\subsection{The case when ${\cal M}_k\equiv I_n+\tau {\cal A}^*{\cal A}$ }
In some problems, Assumption \ref{ass_fstar} on $h^*$ may not hold, e.g. when $h(y)=\|y-b\|_2$ for a given vector $b\in \mathbb{R}^m$. Then we can choose ${\cal M}_k\equiv I_n+\tau {\cal A}^*{\cal A}$, where $\tau$ is a given positive number. The reason why we add the ${\cal A}^*{\cal A}$ term is to deal with the possible lack of strong convexity in the function $h^*$.

To be specified, \eqref{eq:sub_pppa} can be equivalently written as
\begin{equation}\label{eq:ATA_P}
\min_{x\in\mathbb{R}^n,y\in\mathbb{R}^m}\Big\{ h(y)  - \langle c,x\rangle +p(x)+\frac{1}{2\sigma_k}\|x-x^k\|^2+\frac{\tau}{2\sigma_k}\|y-{\cal A}x^k\|^2\mid {\cal A}x-y=0\Big\}.
\end{equation}
As discussed before, we can solve \eqref{eq:ATA_P} by the dual Newton algorithm. The dual of \eqref{eq:ATA_P} is given as
\begin{align}
\max_{u\in \mathbb{R}^n}\Big\{\psi_k(u)&:= -\frac{\tau}{2\sigma_k}\|{\cal A}x^k+\frac{\sigma_k}{\tau}u\|^2 +\frac{\tau}{\sigma_k}{\rm E}_{\sigma_k h/\tau}({\cal A}x^k+\frac{\sigma_k}{\tau}u) +\frac{\tau}{2\sigma_k}\|{\cal A}x^k\|^2\notag\\
&- \frac{1}{2\sigma_k}\|x^k+\sigma_k c-\sigma_k {\cal A}^*u\|^2 +\frac{1}{\sigma_k}{\rm E}_{\sigma_k p}(x^k+\sigma_k c-\sigma_k {\cal A}^*u)+\frac{1}{2\sigma_k}\|x^k\|^2\Big\}.\label{eq:ATA_D}
\end{align}
As long as we can obtain $\bar{u}^{k+1}\in \arg\max\psi_k(u)$, the update of $x$ in the preconditioned PPA will be obtained by
\[
\bar{x}^{k+1}={\rm Prox}_{\sigma_k p}(x^k + \sigma_k c - \sigma_k {\cal A}^{*}\bar{u}^{k+1}).
\]
Therefore, one can still apply the general algorithmic framework PPDNA in Algorithm \ref{alg:ppdna} to solve \eqref{eq: Convex-composite-program} but with $\phi_k(\cdot)$ in \eqref{eq:solve_u} replaced by $\psi_k(\cdot)$ in \eqref{eq:ATA_D}. The following proposition shows that the stopping criteria \eqref{eq:stopA_pre_ppa} and \eqref{eq:stopB_pre_ppa} can be achieved by using $u^{k+1}$ and $x^{k+1}$. The idea is come from \cite{liu2012implementable} and the proof can be found in the supplementary materials.

\begin{proposition}\label{prop:stop_of_ATA}
	When we use preconditioned PPA to solve \eqref{eq: Convex-composite-program} with ${\cal M}_k\equiv I_n+\tau {\cal A}^*{\cal A}$, where $\tau$ is a given positive number, the stopping criteria \eqref{eq:stopA_pre_ppa} and \eqref{eq:stopB_pre_ppa} can be achieved by the following two implementable ones:
	\begin{align}
	f_k(x^{k+1})-\psi_k(u^{k+1})&\leq \frac{\epsilon_k^2}{2\sigma_k},\quad \epsilon_k \geq 0,\quad \sum_{k=0}^{\infty}\epsilon_k <\infty,\tag{A''}\label{eq:stopA_ATA}\\
	f_k(x^{k+1})-\psi_k(u^{k+1})&\leq \frac{\delta_k^2}{2\sigma_k} \|x^{k+1}-x^k\|_{{\cal M}_k}^2,\quad 0\leq \delta_k < 1,\quad \sum_{k=0}^{\infty}\delta_k <\infty,\tag{B''}\label{eq:stopB_ATA}
	\end{align}
	where $f_k(\cdot)$ is defined in \eqref{eq:sub_pppa} and $\psi_k(\cdot)$ is defined in \eqref{eq:ATA_D}.
\end{proposition}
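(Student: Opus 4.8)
The plan is to exploit two ingredients: the strong convexity of the subproblem objective $f_k$ and weak duality between the primal \eqref{eq:ATA_P} and its dual \eqref{eq:ATA_D}. First I would observe that since ${\cal M}_k = I_n + \tau {\cal A}^*{\cal A}$, the proximal term $\frac{1}{2\sigma_k}\|x - x^k\|_{{\cal M}_k}^2$ is exactly $\frac{1}{\sigma_k}$-strongly convex with respect to $\|\cdot\|_{{\cal M}_k}$, while the remaining terms $h({\cal A}x) - \langle c,x\rangle + p(x)$ are convex. Hence the objective $f_k$ in \eqref{eq:sub_pppa} is $\frac{1}{\sigma_k}$-strongly convex in the $\|\cdot\|_{{\cal M}_k}$-norm. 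Writing $\bar{x}^{k+1} = {\cal P}_k(x^k)$ for the exact (unique) minimizer and using $0\in\partial f_k(\bar{x}^{k+1})$, this yields the quadratic growth bound
\[
f_k(x) - f_k(\bar{x}^{k+1}) \geq \frac{1}{2\sigma_k}\|x - \bar{x}^{k+1}\|_{{\cal M}_k}^2, \quad \forall x \in \mathbb{R}^n.
\]

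Next I would invoke convex duality for the pair \eqref{eq:ATA_P}--\eqref{eq:ATA_D}. Because the coupling constraint ${\cal A}x - y = 0$ is affine and can always be satisfied (e.g.\ by taking $y = {\cal A}x$), strong duality holds, so the common optimal value equals $f_k(\bar{x}^{k+1}) = \psi_k(\bar{u}^{k+1})$, and in particular weak duality gives the one-sided bound $\psi_k(u^{k+1}) \leq f_k(\bar{x}^{k+1})$ for the approximate dual iterate $u^{k+1}$. Combining this with the quadratic growth bound evaluated at $x = x^{k+1}$ produces the central estimate
\[
\frac{1}{2\sigma_k}\|x^{k+1} - \bar{x}^{k+1}\|_{{\cal M}_k}^2 \leq f_k(x^{k+1}) - f_k(\bar{x}^{k+1}) \leq f_k(x^{k+1}) - \psi_k(u^{k+1}),
\]
that is, $\|x^{k+1} - {\cal P}_k(x^k)\|_{{\cal M}_k}^2 \leq 2\sigma_k\big(f_k(x^{k+1}) - \psi_k(u^{k+1})\big)$, which converts a bound on the computable primal-dual gap directly into a bound on the ${\cal M}_k$-distance to the exact PPA step.

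Finally I would substitute the implementable criteria. Under \eqref{eq:stopA_ATA} the right-hand side is at most $2\sigma_k\cdot\frac{\epsilon_k^2}{2\sigma_k} = \epsilon_k^2$, so $\|x^{k+1} - {\cal P}_k(x^k)\|_{{\cal M}_k} \leq \epsilon_k$, which is exactly \eqref{eq:stopA_pre_ppa}; likewise, under \eqref{eq:stopB_ATA} the right-hand side is at most $\delta_k^2\|x^{k+1} - x^k\|_{{\cal M}_k}^2$, giving $\|x^{k+1} - {\cal P}_k(x^k)\|_{{\cal M}_k} \leq \delta_k\|x^{k+1} - x^k\|_{{\cal M}_k}$, which is \eqref{eq:stopB_pre_ppa}. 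The main obstacle I anticipate is the rigorous justification of the duality step: confirming that \eqref{eq:ATA_D} is genuinely the Lagrangian dual of \eqref{eq:ATA_P} with zero duality gap (so that weak duality delivers the correct lower bound $\psi_k(u^{k+1}) \leq f_k(\bar{x}^{k+1})$), together with the Moreau-envelope and conjugate-function bookkeeping needed in deriving $\psi_k$ from \eqref{eq:ATA_P}. Once the primal-dual gap bound is in hand, the remaining substitutions are routine.
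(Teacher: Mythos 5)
Your proposal is correct and follows essentially the same route as the paper's proof: both establish the quadratic growth bound $f_k(x^{k+1})-\inf f_k \geq \frac{1}{2\sigma_k}\|x^{k+1}-{\cal P}_k(x^k)\|_{{\cal M}_k}^2$ (the paper derives it by hand from the subgradient inequality at the minimizer, while you invoke strong convexity of $f_k$ in the $\|\cdot\|_{{\cal M}_k}$-norm directly), and then both bound $\inf f_k$ from below by $\psi_k(u^{k+1})$ via duality before substituting the implementable criteria. The only cosmetic difference is that the paper appeals to strong duality $\inf f_k = \sup\psi_k$ whereas you correctly observe that weak duality already suffices for the needed one-sided inequality.
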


Next we discuss about how to solve \eqref{eq:ATA_D}. As one can see, $\psi_k$ is continuously differentiable with
\[
\nabla \psi_k(u)=-{\rm Prox}_{\sigma_k h/\tau}({\cal A}x^k+\frac{\sigma_k}{\tau}u)+{\cal A} {\rm Prox}_{\sigma_k p}(x^k+\sigma_k c-\sigma_k{\cal A}^*u).
\]
The surrogate generalized Jacobian of the Lipschitz continuous function $\nabla\psi_k(\cdot)$ at $u$ can be defined as
\[
\hat{\partial}^2 \psi_k(u)  :=  -\frac{\sigma_k}{\tau}\partial {\rm Prox}_{\sigma_k h/\tau}({\cal A}x^k+\frac{\sigma_k}{\tau}u)  - \sigma_k{\cal A}\partial{\rm Prox}_{\sigma_k p}(x^k + \sigma_k c - \sigma_k{\cal A}^{*}u){\cal A}^{*}.
\]
Under some conditions on $h$, e.g. the elements in $\partial {\rm Prox}_{\nu h}(\cdot)$ are positive definite for any $\nu >0$, one can still apply the SSN method in Algorithm \ref{alg:ssn} to solve \eqref{eq:ATA_D} just as in the previous subsection.

\begin{remark}
	Suppose we consider the logistic regression problem, i.e. $h(y) = \sum_{i=1}^m \log(1 + \exp(-b_i y_i))$, for some given vector $b\in \{-1,1\}^m$. Since it can be proved that $h^*(\cdot)$ satisfies Assumption \ref{ass_fstar}, it is natural for us to apply the classical PPA (preconditioned PPA with ${\cal M}_k\equiv I_n$). Besides, we can also apply the preconditioned PPA with ${\cal M}_k\equiv I_n+\tau {\cal A}^*{\cal A}$. The main motivation for considering the latter case is that the condition number of the linear system in the SSN method would not blow up while those associated with the former case may blow up when $|({\cal A}x)_i|$ is large for some $i$. As for the proximal mapping of $h$, it can be computed coordinate-wise by Newton's method efficiently.
\end{remark}

\section{Closed-form solution to the proximal mapping of $\rho\|w\circ\cdot\|_1^2$ and its generalized Jacobian}
\label{sec:proxJacobian}
From the discussion in the previous section in solving the exclusive lasso model with the PPDNA algorithm, it is clear that we need the proximal mapping ${\rm Prox}_p(\cdot)$ and its generalized Jacobian for $p(\cdot)=\lambda \Delta^{\mathcal{G},w}(\cdot)$. In this section, for a given weight vector $w\in \mathbb{R}^n_{++}$ and $\rho>0$, we derive the closed-form solution to ${\rm Prox}_{\rho\|w\circ\cdot\|_1^2}(\cdot)$ and its generalized Jacobian.

\subsection{Closed-form solution to ${\rm Prox}_{\rho\|w\circ\cdot\|_1^2}(\cdot)$}
\label{subsec:prox}
The closed-form solution for the proximal mapping of $\rho\|w\circ\cdot\|_1^2$ we present here is consistent with the result in \cite[Proposition 4]{kowalski2009sparse}. However, in section 4.1 of \cite{kowalski2009sparse}, after a change of variables, the author tries to find the optimal solution of a constrained optimization problem by directly setting the gradient to zero (equations (23) and (24) in \cite{kowalski2009sparse}). Although the formula obtained for the closed-form solution is fortuitously correct, the derivation is not mathematically rigorous as the exclusive lasso regularizer is not continuously differentiable. One can use a simple example to demonstrate the gap. Consider the problem
\begin{align*}
\min_{x_{1,1},x_{1,2}}\Big\{\frac{1}{2}(x_{1,1}-1)^2+\frac{1}{2}(x_{1,2}-0.5)^2+(|x_{1,1}|+|x_{1,2}|)^2\Big\}.
\end{align*}
The true solution is $x^*=[1/3;0]$. But equation (23) in \cite{kowalski2009sparse} is equivalent to 
\begin{align*}
|x_{1,1}| = 1-2(|x_{1,1}|+|x_{1,2}|),\quad
|x_{1,2}| = 0.5-2(|x_{1,1}|+|x_{1,2}|).
\end{align*}
Thus $|x_{1,1}|=2/5$, $|x_{1,2}|=-1/10$. But the latter contradicts the fact that $|x_{1,2}|\geq 0$. 

Our derivations in Proposition \ref{prop:pos_prox} thus aim to provide a rigorous proof based on the KKT optimality conditions. We first consider the case when $a\geq 0$, then one can show that ${\rm Prox}_{\rho\|w\circ\cdot\|_1^2}(a)$ must also be nonnegative and hence it could be equivalently computed by
\begin{align}
x(a)&:=\arg\min_{x \in \mathbb{R}^n_{+}} \Big\{\frac{1}{2}\|x - a\|^2 + \rho\|w\circ x\|_1^2
\Big\}=\arg\min_{x \in \mathbb{R}^n_{+}} \Big\{\frac{1}{2}\|x - a\|^2 +  \rho x^T(ww^T)x \Big\}.\label{eq:pos_problem}
\end{align}
Note that since the objective function is strongly convex, the above minimization problem has a unique solution, which can be computed as in the following proposition.
\begin{proposition}\label{prop:pos_prox}
	Given $\rho > 0$ and $a \in \mathbb{R}_{+}^n \backslash\{0\}$. Let $a^{w}\in \mathbb{R}^n$ be defined as $a^{w}_i :=a_i/w_i$, for $i=1,\cdots,n$. There exists a permutation matrix $\Pi$ such that $\Pi a^{w}$ is sorted in a non-increasing order. Denote $\tilde{a}=\Pi a $,  $\tilde{w}=\Pi w $, and
	\[
	s_{i} = \sum_{j=1}^{i}
	\tilde{w}_j \tilde{a}_j, \quad
	L_{i} = \sum_{j=1}^{i}\tilde{w}_j^2,\quad
	\alpha_i = \frac{  s_{i}}{1 + 2\rho L_{i}}, \quad i = 1, 2, \dots, n.
	\]
	Let $\bar{\alpha}=\max_{1\leq i \leq n}\alpha_i$. Then, $x(a)$ defined in \eqref{eq:pos_problem} can be computed as: $x(a) = (a -2\rho \bar{\alpha} w)^+$.
\end{proposition}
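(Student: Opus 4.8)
The plan is to treat the problem in \eqref{eq:pos_problem} as a strongly convex quadratic program over the nonnegative orthant, write down its KKT conditions, and then verify that the claimed formula $x(a)=(a-2\rho\bar{\alpha}w)^{+}$ satisfies them. Since the objective $\tfrac12\|x-a\|^2+\rho(w^{T}x)^2$ is strongly convex, the KKT conditions are both necessary and sufficient, so exhibiting a single KKT point proves it is \emph{the} unique minimizer.

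First I would form the Lagrangian with a multiplier $\mu\in\mathbb{R}^n_{+}$ for the constraint $x\ge 0$. Writing $t:=w^{T}x\ge 0$, stationarity reads $(x-a)+2\rho t\,w-\mu=0$, and eliminating $\mu$ through the sign condition $\mu\ge 0$ and complementarity $\mu_i x_i=0$ shows that the KKT system is equivalent to the coordinatewise formula $x_i=(a_i-2\rho t w_i)^{+}$ together with the scalar consistency equation $t=\sum_{i=1}^n w_i (a_i-2\rho t w_i)^{+}$. Hence it suffices to show that $t=\bar{\alpha}$ solves this consistency equation, i.e.\ to prove the single identity $\sum_{i=1}^n w_i(a_i-2\rho\bar{\alpha} w_i)^{+}=\bar{\alpha}$; the threshold conditions governing the sign of $a_i-2\rho\bar{\alpha}w_i$ then automatically produce an admissible $\mu$.

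To evaluate that sum I would pass to the sorted coordinates $\tilde a=\Pi a$, $\tilde w=\Pi w$, in which the ratios $\tilde a_i/\tilde w_i$ are non-increasing. The active set $\{i:\tilde a_i/\tilde w_i>2\rho\bar{\alpha}\}$ is then a prefix $\{1,\dots,m\}$, the sum collapses to $s_m-2\rho\bar{\alpha}L_m$, and the desired identity becomes $\bar{\alpha}=s_m/(1+2\rho L_m)=\alpha_m$. So everything reduces to identifying the prefix length $m$ with an index at which $\alpha_i$ attains its maximum $\bar{\alpha}$. The key tool is the discrete monotonicity equivalence
\[
\alpha_i>\alpha_{i-1}\;\Longleftrightarrow\;\frac{\tilde a_i}{\tilde w_i}>2\rho\alpha_{i-1}\;\Longleftrightarrow\;\frac{\tilde a_i}{\tilde w_i}>2\rho\alpha_i,
\]
which I would obtain by clearing denominators in $\alpha_i=s_i/(1+2\rho L_i)$ using $s_i=s_{i-1}+\tilde w_i\tilde a_i$ and $L_i=L_{i-1}+\tilde w_i^2$ (the same holds verbatim with $>$ replaced by $=$ or $\le$). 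Choosing $k^{*}:=\min\{i:\alpha_i=\bar{\alpha}\}$, the strict increase $\alpha_{k^{*}-1}<\alpha_{k^{*}}$ gives $\tilde a_{k^{*}}/\tilde w_{k^{*}}>2\rho\bar{\alpha}$, while $\alpha_{k^{*}+1}\le\alpha_{k^{*}}$ gives $\tilde a_{k^{*}+1}/\tilde w_{k^{*}+1}\le 2\rho\bar{\alpha}$; monotonicity of the sorted ratios then forces the active prefix to be exactly $\{1,\dots,k^{*}\}$, so $m=k^{*}$ and the consistency identity follows from $s_{k^{*}}-2\rho\alpha_{k^{*}}L_{k^{*}}=\alpha_{k^{*}}=\bar{\alpha}$.

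The main obstacle is precisely this combinatorial identification of the active set: proving that the index $k^{*}$ maximizing $\alpha_i$ is exactly the boundary between the active and inactive coordinates, which is what the monotonicity equivalence above delivers. Minor points to handle carefully are the boundary and tie cases — the edge case $k^{*}=1$ (where the left equivalence is vacuous and one checks directly that $\tilde a_1/\tilde w_1>2\rho\alpha_1$ reduces to $1>0$), the possibility of equalities $\tilde a_i/\tilde w_i=2\rho\bar{\alpha}$ (which contribute zero to the sum and are harmless under the choice $k^{*}=\min$), and confirming $\bar{\alpha}\ge\alpha_1>0$ so that at least the first coordinate is active and the consistency value $t$ is positive.
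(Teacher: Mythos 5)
Your proposal is correct, and it shares the paper's skeleton — reformulate \eqref{eq:pos_problem} as a QP over the nonnegative orthant, write the KKT conditions \eqref{eq: pos_KKT}, reduce them to the coordinatewise formula $x=(a-2\rho t w)^{+}$ plus the scalar consistency equation $t=\sum_i w_i(a_i-2\rho t w_i)^{+}$, and exploit the sorting of $\tilde a_i/\tilde w_i$ so that the active set is a prefix. Where you genuinely differ is in the direction of the final, combinatorial step. The paper starts from the (existing, unique) KKT pair, sets $\beta=w^{T}x^{*}$, locates $\beta$ among the sorted ratios to get $\beta=\alpha_k$ for the resulting prefix length $k$, and only then proves $\alpha_k=\max_i\alpha_i$ via the explicit telescoping identity $\alpha_k-\alpha_i=\bigl(\sum_{j=i+1}^{k}\tilde w_j^2(\tilde a_j/\tilde w_j-2\rho\beta)\bigr)/(1+2\rho L_i)\ge 0$. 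You instead run the argument in reverse: you take $\bar\alpha=\max_i\alpha_i$ as a candidate, and your local equivalence $\alpha_i>\alpha_{i-1}\Leftrightarrow\tilde a_i/\tilde w_i>2\rho\alpha_i$ (valid also with $=$ and $\le$) identifies the active prefix at level $2\rho\bar\alpha$ with $\{1,\dots,k^{*}\}$ for $k^{*}=\min\{i:\alpha_i=\bar\alpha\}$, after which consistency and uniqueness finish the proof. Your route has the advantage of being a pure verification (no need to first argue that some ratio exceeds $2\rho\beta$, which the paper must do by contradiction), and the local equivalence is a cleaner tool than the global difference formula; the paper's version has the advantage of explaining \emph{why} the multiplier must be the maximum rather than merely checking that the maximum works. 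One small imprecision: in the edge case $k^{*}=1$ the inequality $\tilde a_1/\tilde w_1>2\rho\alpha_1$ reduces (after clearing denominators) to $\tilde a_1>0$, not to $1>0$; this holds because $a\in\mathbb{R}^n_{+}\setminus\{0\}$ and the sorting puts the largest ratio first, but it should be stated as such.
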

\begin{proof}
	The KKT conditions for \eqref{eq:pos_problem} are given by
	\begin{equation}\label{eq: pos_KKT}
	x - a + 2\rho w w^Tx + \mu = 0,\;
	\mu \circ x =0,\; \mu \leq 0, \;x \geq 0,
	\end{equation}
	where $\mu \in \mathbb{R}^n$ is the dual multiplier. If $(x^*,\mu^{*})$ satisfies the KKT conditions \eqref{eq: pos_KKT},  by denoting $\beta=w^T x^*$, we can see that
	\begin{equation*}
	x^* +  \mu^* = a - 2\rho\beta w ,\;
	\mu^* \circ x^* =0,\; \mu^* \leq 0, \;x^* \geq 0.
	\end{equation*}
	Therefore, $(x^*,\mu^{*})$ have the representations:
	\[
	x^*=(a - 2\rho\beta w)^+,\quad \mu^*=(a - 2\rho\beta w)^-.
	\]
	Then our aim is to find the value of $\beta$. By the definition of $\beta$, we can see that
	\begin{align*}
	\beta = \sum_{i=1}^n w_i x^*_i=\sum_{i=1}^n w_i (a_i - 2\rho\beta w_i)^+=\sum_{i=1}^n w^2_i ((a^{w})_i - 2\rho\beta )^+=\sum_{i=1}^n \tilde{w}^2_i ((\Pi a^{w})_i - 2\rho\beta )^+.
	\end{align*}
	Note that there must exist $j$ such that $(\Pi  a^{w})_j>2\rho \beta$, otherwise, we have $\beta=0$ and $\Pi a^{w}\leq 0$ (equivalent to $a\leq 0$), which contradicts the assumption. Since $\Pi a^{w}$ is sorted in a non-increasing order, there exists an index $k$ such that $\tilde{a}_{1}/\tilde{w}_{1} \geq \dots \geq \tilde{a}_{k}/\tilde{w}_{k} \geq 2\rho\beta > \tilde{a}_{k+1}/\tilde{w}_{k+1} \geq \dots \geq \tilde{a}_{n}/\tilde{w}_{n}$. Therefore,
	\[
	\beta = \sum_{i=1}^{k} \tilde{w}^2_i ((\Pi a^{w})_i - 2\rho\beta )=\sum_{i=1}^{k}  \tilde{w}_i\tilde{a}_i- 2\rho\beta \sum_{i=1}^{k}  \tilde{w}^2_i=s_{k} -2\rho \beta L_{k} ,
	\]
	which means that
	\[
	\beta = \frac{s_{k} }{1+2\rho L_{k} }=\alpha_k.
	\]
	Next we show that $\beta=\bar{\alpha}$, which means $\alpha_k\geq \alpha_i$ for all $i$. For $i<k$,
	\begin{align*}
	&\ \alpha_k - \alpha_{i} =  \frac{(1 + 2\rho L_{i}) s_{k} - (1+2\rho L_{k}) s_{i}}{(1+2\rho L_{k})(1+2\rho L_{i})}=  \frac{(1 + 2\rho L_{k})(s_k-s_i) -2\rho s_k\sum_{j=i+1}^k\tilde{w}_j^2}{(1+2\rho L_{k})(1+2\rho L_{i})}\\
	&=  \frac{(1 + 2\rho L_{k})\sum_{j=i+1}^k\tilde{w}_j\tilde{a}_j -2\rho (1 + 2\rho L_{k})\beta\sum_{j=i+1}^k\tilde{w}_j^2}{(1+2\rho L_{k})(1+2\rho L_{i})}=  \frac{\sum_{j=i+1}^k \tilde{w}_j^2 (\tilde{a}_j/\tilde{w}_j -2\rho \beta)}{1+2\rho L_{i}}\geq 0.
	\end{align*}
	We can prove that $\alpha_k\geq \alpha_i$ for all $i>k$ in a similar way. Therefore, we have that $\beta=\alpha_k=\max_{1\leq i \leq n}\alpha_i=\bar{\alpha}$.
	
	Finally, since the solution to \eqref{eq:pos_problem} is unique, we have
	\[
	x(a)=x^*=(a - 2\rho\beta w)^+=(a - 2\rho\bar{\alpha} w)^+.
	\]
\end{proof}
With the results above, we now give the closed-form solution to ${\rm Prox}_{\rho\|w\circ\cdot\|_1^2}(a)$ for any $a \in \mathbb{R}^n$.
\begin{proposition}\label{prop:proxmapping}
	For given $\rho > 0$ and $a \in \mathbb{R}^n$, we have
	\[
	{\rm Prox}_{\rho\|w\circ\cdot\|_1^2}(a)={\rm sign}(a)\circ {\rm Prox}_{\rho\|w\circ\cdot\|_1^2}(|a|)={\rm sign}(a)\circ x(|a|),
	\]
	where $x(\cdot)$ is defined in \eqref{eq:pos_problem} and can be computed by Proposition \ref{prop:pos_prox}. (Hence  ${\rm Prox}_{\rho\|w\circ\cdot\|_1^2}(a)$ can be computed in $O(n\log n)$ operations.)
\end{proposition}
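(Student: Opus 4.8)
The plan is to reduce the general computation to the nonnegative case already resolved in Proposition~\ref{prop:pos_prox}, exploiting two symmetries of the objective $\frac{1}{2}\|x-a\|^2+\rho\|w\circ x\|_1^2$. The crucial observation is that, because $w\in\mathbb{R}^n_{++}$, the regularizer $\|w\circ x\|_1^2=(\sum_i w_i|x_i|)^2$ depends on $x$ only through $|x|$, and is therefore invariant under any coordinatewise sign flip.

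First I would prove the sign-equivariance identity ${\rm Prox}_{\rho\|w\circ\cdot\|_1^2}(\epsilon\circ a)=\epsilon\circ{\rm Prox}_{\rho\|w\circ\cdot\|_1^2}(a)$ for every $\epsilon\in\{-1,1\}^n$. This is a direct change of variables: substituting $x=\epsilon\circ y$ into the minimization problem and using $\epsilon_i^2=1$ and $|\epsilon_i y_i|=|y_i|$ leaves both terms of the objective unchanged, so the minimizer in $x$ is $\epsilon$ times the minimizer in $y$. Choosing $\epsilon={\rm sign}(a)$ and noting that ${\rm sign}(a)\circ|a|=a$ gives the first asserted equality ${\rm Prox}_{\rho\|w\circ\cdot\|_1^2}(a)={\rm sign}(a)\circ{\rm Prox}_{\rho\|w\circ\cdot\|_1^2}(|a|)$.

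Next I would establish the second equality ${\rm Prox}_{\rho\|w\circ\cdot\|_1^2}(|a|)=x(|a|)$, i.e. that for nonnegative data the unconstrained proximal mapping coincides with the solution of the nonnegatively constrained problem \eqref{eq:pos_problem}. Here I use a truncation argument: if $x^*$ denotes the unique minimizer for nonnegative data $b:=|a|\geq 0$, then replacing $x^*$ by its coordinatewise absolute value $|x^*|$ leaves the regularizer unchanged while, coordinatewise, $(x_i^*-b_i)^2-(|x_i^*|-b_i)^2=2b_i(|x_i^*|-x_i^*)\geq 0$ shows the data-fidelity term does not increase. By uniqueness of the minimizer of the strongly convex objective, I conclude $x^*=|x^*|\geq 0$; hence $x^*$ is feasible for \eqref{eq:pos_problem}, and since the two objectives agree on $\mathbb{R}^n_+$, it must equal $x(|a|)$, which Proposition~\ref{prop:pos_prox} computes in closed form. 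The degenerate case $a=0$ is immediate, since the objective is then minimized at the origin.

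Combining the two equalities yields ${\rm Prox}_{\rho\|w\circ\cdot\|_1^2}(a)={\rm sign}(a)\circ x(|a|)$. For the complexity claim I would simply count operations in the evaluation of $x(|a|)$ via Proposition~\ref{prop:pos_prox}: forming $a^w$, the cumulative sums $s_i,L_i$, the ratios $\alpha_i$, the maximum $\bar{\alpha}$, and the final thresholding each cost $O(n)$, so the dominant cost is the sort producing the permutation $\Pi$, giving the overall $O(n\log n)$ bound. None of the steps presents a genuine obstacle; the only points requiring care are verifying that the sign convention ${\rm sign}(0)=1$ is consistent with the formula (it is, since $x(|a|)_i=0$ whenever $a_i=0$, as $\bar{\alpha}\geq 0$), and invoking uniqueness of the minimizer to upgrade the objective inequalities into the desired equalities.
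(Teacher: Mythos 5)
Your proof is correct and follows essentially the same route as the paper, which simply invokes the sign-invariance of $\|w\circ\cdot\|_1^2$. You additionally supply the truncation argument showing that for nonnegative data the unconstrained proximal point is itself nonnegative (so it coincides with $x(|a|)$ from \eqref{eq:pos_problem}) — a step the paper asserts without proof before Proposition~\ref{prop:pos_prox} — which is a worthwhile piece of added rigor but not a different approach.
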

\begin{proof}
	Since $\|w\circ x\|_1^2$ is invariant to sign changes, the conclusion of this proposition hold.
\end{proof}

\subsection{The generalized Jacobian of ${\rm Prox}_{\rho\|w\circ\cdot\|_1^2}(\cdot)$ }
In order to design the SSN method to solve nonsmooth equations involving the exclusive lasso regularizer, it is critical for us to derive an explicit formula for some form of the generalized Jacobian of ${\rm Prox}_{\rho\|w\circ\cdot\|_1^2}(\cdot)$. Here, we derive a specific element in the set of the HS-Jacobian of ${\rm Prox}_{\rho\|w\circ\cdot\|_1^2}(\cdot)$ based on the quadratic programming (QP) reformulation of ${\rm Prox}_{\rho\|w\circ\cdot\|_1^2}(\cdot)$.

By Proposition \ref{prop:proxmapping}, we know that in order to get the generalized Jacobian of ${\rm Prox}_{\rho\|w\circ\cdot\|_1^2}(\cdot)$, we need to study the generalized Jacobian of $x(a)$ first. For any $a\in \mathbb{R}^n$, if we denote $ Q = I_n +  2\rho ww^T \in \mathbb{R}^{n\times n}$, \eqref{eq:pos_problem} can be equivalently written as
\begin{align}
x(a)=\arg\min_{x \in \mathbb{R}^n} \Big\{ \frac{1}{2} \langle x, Qx \rangle - \langle x, a \rangle \mid x\geq 0 \Big\}.\label{eq:QP}
\end{align}
Based on the above strongly convex QP, we can derive the HS-Jacobian of
$x(a)$ by applying the general results established in
\cite{han1997newton,li2018efficiently}, which will be described in the following paragraphs.

As one can see from \eqref{eq: pos_KKT} and the fact that $x(a)$ admits a unique solution, the corresponding dual multiplier $\mu$ also has a unique solution, which can be denoted as $\mu(a)$. The optimality conditions given in \eqref{eq: pos_KKT} can be equivalently given as
\begin{equation}\label{eq: KKT_QP}
Qx(a) - a + \mu(a) = 0,\;
\mu(a)^Tx(a) = 0,\;\mu(a) \leq 0, \; x(a)\geq 0.
\end{equation}
Denote the active set
\begin{equation}\label{eq: activeset}
I(a): = {\{i \in \{ 1,\ldots,n \} \mid x(a) = 0\}}.
\end{equation}
Now, we define a collection of index sets:
\[
{\cal K}(a): =  \{\;  K\subseteq \{1,\ldots,n\} \mid  {\rm supp}(\mu(a)) \subseteq K\subseteq I(a)\},
\]
where ${\rm supp}(\mu(a))$ denotes  the set of indices $i$ such that $\mu(a)_i \neq 0$. Note that the set ${\cal K}(a)$ is non-empty \cite{han1997newton}. Since the B-subdifferential $\partial_B x(a)$ is difficult to compute, according to the ideas in \cite{han1997newton,li2018efficiently}, we define the following multi-valued mapping $\partial_{\rm HS}x(a)$: $\mathbb{R}^{n} 	\rightrightarrows \mathbb{R}^{n\times n}$:
\begin{equation}\label{eq:HS_QP}
\partial_{\rm HS}x(a) := \left\{
P\in\mathbb{R}^{n\times n}\mid P = Q^{-1} - Q^{-1}
I_K^T\left( I_K Q^{-1}I_K^T\right)^{-1}I_K Q^{-1}, K\in{\cal K}(a)
\right\}
\end{equation}
as a computational replacement for $\partial_Bx(a)$, where $I_K$ is the matrix consisting of the rows of $I_n$, indexed by $K$. The set $\partial_{\rm HS}x(a)$ is known as the HS-Jacobian  of $x(\cdot)$ at $a$. The following proposition from \cite{li2018efficiently} provides some useful properties of $\partial_{\rm HS} x(a)$.
\begin{proposition}(\cite[Proposition 2]{li2018efficiently})\label{prop: QP_HS}
	For $a \in \mathbb{R}^n$, there exists a neighborhood $U$ of $a$ such that for any $a' \in U$, it holds that ${\cal K}(a') \subseteq {\cal K}(a)$, $\partial_{\rm HS}x(a') \subseteq \partial_{\rm HS}x(a)$. If ${\cal K}(a') \subseteq {\cal K}(a)$, then
	\[
	x(a') = x(a) + P(a' - a),\quad \forall P \in \partial_{\rm HS}x(a').
	\]
\end{proposition}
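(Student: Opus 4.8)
The plan is to prove the two assertions in turn. The first---local containment ${\cal K}(a')\subseteq{\cal K}(a)$---yields $\partial_{\rm HS}x(a')\subseteq\partial_{\rm HS}x(a)$ at once, since by \eqref{eq:HS_QP} every element of the HS-Jacobian is the matrix $P$ attached to some index set $K$ in the corresponding collection. The second is the affine identity, which I would obtain by showing that, once $K$ is fixed, $x$ is represented exactly by the associated $P$ both at $a$ and at $a'$.

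For the containment I would first note that $x(\cdot)$ is globally Lipschitz continuous: by completing the square, the solution of \eqref{eq:QP} is the projection of $Q^{-1}a$ onto $\mathbb{R}^n_{+}$ in the $Q$-inner product, and projections are nonexpansive while $Q^{-1}a$ depends linearly on $a$. The stationarity relation in \eqref{eq: KKT_QP} then gives $\mu(a)=a-Qx(a)$, so $\mu(\cdot)$ is Lipschitz as well. Continuity controls the active set and the multiplier support in opposite directions: for each $i\notin I(a)$ we have $x(a)_i>0$, so $x(\cdot)_i$ stays positive on a neighborhood of $a$, and intersecting over the finitely many such $i$ yields a neighborhood on which $I(a')\subseteq I(a)$; symmetrically, for each $i\in{\rm supp}(\mu(a))$ the component $\mu(\cdot)_i$ stays nonzero on a neighborhood, so ${\rm supp}(\mu(a))\subseteq{\rm supp}(\mu(a'))$ on the intersection. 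Calling $U$ the intersection of these two neighborhoods, for any $a'\in U$ and any $K'\in{\cal K}(a')$ the defining chain ${\rm supp}(\mu(a'))\subseteq K'\subseteq I(a')$ combines with the two inclusions to give ${\rm supp}(\mu(a))\subseteq K'\subseteq I(a)$, that is $K'\in{\cal K}(a)$; hence ${\cal K}(a')\subseteq{\cal K}(a)$.

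For the affine identity the key step is to recognise that, for any $K\in{\cal K}(a)$, the true primal-dual pair solves a nonsingular reduced (equality-constrained) KKT system. Indeed $K\subseteq I(a)$ forces $I_Kx(a)=0$, while ${\rm supp}(\mu(a))\subseteq K$ forces $\mu(a)=I_K^{T}(I_K\mu(a))$; substituting into the stationarity equation $Qx(a)-a+\mu(a)=0$ shows that
\[
\begin{pmatrix} Q & I_K^{T}\\ I_K & 0 \end{pmatrix}\begin{pmatrix} x(a)\\ I_K\mu(a)\end{pmatrix}=\begin{pmatrix} a\\ 0\end{pmatrix}.
\]
Because $Q\succ 0$ and $I_K$ has full row rank (its rows are distinct standard basis vectors), this saddle-point matrix is invertible; eliminating the multiplier through its Schur complement gives the unique primal component $x(a)=Q^{-1}a-Q^{-1}I_K^{T}(I_KQ^{-1}I_K^{T})^{-1}I_KQ^{-1}a=Pa$, with $P$ exactly the matrix attached to $K$ in \eqref{eq:HS_QP}. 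Now I would take any $P\in\partial_{\rm HS}x(a')$, say arising from $K\in{\cal K}(a')$; the same computation carried out at $a'$ gives $x(a')=Pa'$, and since ${\cal K}(a')\subseteq{\cal K}(a)$ we also have $K\in{\cal K}(a)$, so the identical $P$ gives $x(a)=Pa$. Subtracting yields $x(a')-x(a)=P(a'-a)$, which is the claim.

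The hard part will be the neighborhood argument in the first assertion, precisely because the active set $I(\cdot)$ shrinks under perturbation while the support ${\rm supp}(\mu(\cdot))$ grows; the argument succeeds only because these two inclusions, though of opposite type, are exactly what is needed to sandwich every $K'\in{\cal K}(a')$ between ${\rm supp}(\mu(a))$ and $I(a)$. The remaining ingredients---Lipschitz continuity of $x(\cdot)$ and $\mu(\cdot)$, invertibility of the saddle-point matrix, and the Schur-complement elimination---are routine.
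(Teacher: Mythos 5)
Your proof is correct; note that the paper itself does not prove this proposition but imports it verbatim from \cite[Proposition 2]{li2018efficiently}, where the argument is essentially the one you give (continuity of $x(\cdot)$ and $\mu(\cdot)=a-Qx(a)$ forcing $I(a')\subseteq I(a)$ and ${\rm supp}(\mu(a))\subseteq{\rm supp}(\mu(a'))$ locally, plus the observation that $x(a)=Pa$ for every $P$ built from a $K\in{\cal K}(a)$). Your reduced saddle-point/Schur-complement derivation of $x(a)=Pa$ is a clean and correct way to obtain the affine identity, and it rightly uses only the hypothesis ${\cal K}(a')\subseteq{\cal K}(a)$ rather than the neighborhood, matching the statement.
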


Based on the results in Proposition \ref{prop:proxmapping} and Proposition \ref{prop: QP_HS}, we can now compute a specific element in the HS-Jacobian $\partial_{\rm HS} {\rm Prox}_{\rho\|w\circ\cdot\|_1^2}(\cdot)$ at any $a \in \mathbb{R}^n$ as follows.
\begin{theorem}\label{thm: HS_jacobian}
	Given $a \in \mathbb{R}^n$, $\rho > 0$, the HS-Jacobian $\partial_{\rm HS} {\rm Prox}_{\rho\|w\circ\cdot\|_1^2}(a)$ can be given as
	\begin{align*}
	\partial_{\rm HS} {\rm Prox}_{\rho\|w\circ\cdot\|_1^2}(a)=\left\{
	\begin{array}{c}
	\Theta P\Theta\mid P \in \partial_{\rm HS}x(|a|)
	\end{array}
	\right\},
	\end{align*}
	where $\partial_{\rm HS}x(\cdot)$ is defined as in \eqref{eq:HS_QP} and $\Theta = {\rm Diag}({\rm sign}(a))$. Moreover, the matrix
	\begin{equation}\label{eq:M0}
	M_0 :=  \Theta  P_0  \Theta, \mbox{ with } P_0 =  Q^{-1} - Q^{-1}I_{I(|a|)}^T
	\left(
	I_{I(|a|)} Q^{-1}
	I_{I(|a|)}^T\right)^{-1}
	I_{I(|a|)}Q^{-1}.
	\end{equation}
	is an element in the HS-Jacobian $\partial_{\rm HS} {\rm Prox}_{\rho\|w\circ\cdot\|_1^2}(a)$.
\end{theorem}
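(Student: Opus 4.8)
The plan is to push everything through the nonnegative QP solution map $x(\cdot)$ from \eqref{eq:QP}, whose HS-Jacobian is already controlled by Proposition \ref{prop: QP_HS}. By Proposition \ref{prop:proxmapping} we have ${\rm Prox}_{\rho\|w\circ\cdot\|_1^2}(a) = \Theta\, x(|a|)$ with $\Theta = {\rm Diag}({\rm sign}(a))$, and since $\Theta a = {\rm sign}(a)\circ a = |a|$ this is the composition $a \mapsto \Theta\, x(\Theta a)$. Differentiating formally by the chain rule suggests the generalized Jacobian should be $\Theta\,\partial_{\rm HS}x(|a|)\,\Theta$, which is exactly the claimed formula, so the task is to make this heuristic rigorous and then locate $M_0$. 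A structural fact I would record first is that vanishing entries of $a$ are automatically active: whenever $a_i = 0$ one has $x_i(|a|) = 0$, hence $i \in I(|a|)$. For $a\neq 0$ this is immediate from the closed form $x(|a|) = (|a| - 2\rho\bar\alpha w)^+$ of Proposition \ref{prop:pos_prox}, because $\bar\alpha \geq 0$ (each $\alpha_i$ is a ratio of nonnegative quantities) and $w > 0$ force $(-2\rho\bar\alpha w_i)^+ = 0$; the case $a = 0$ is trivial, and the same fact can alternatively be read off \eqref{eq: KKT_QP} with complementary slackness.

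Next I would verify the defining first-order expansion property, which legitimises the formula and delivers strong semismoothness. Fix $a'$ near $a$ and pick $P' \in \partial_{\rm HS}x(|a'|)$; applying Proposition \ref{prop: QP_HS} \emph{at the perturbed point} (continuity of $a\mapsto|a|$ gives ${\cal K}(|a'|)\subseteq{\cal K}(|a|)$) yields the exact identity $x(|a'|) = x(|a|) + P'(|a'|-|a|)$. With $\Theta' = {\rm Diag}({\rm sign}(a'))$ and $\delta = a'-a$, the goal ${\rm Prox}(a') = {\rm Prox}(a) + \Theta'P'\Theta'\delta$ reduces, after substituting the expansion, to the two componentwise identities $(|a'|-|a|)_i = {\rm sign}(a_i')\,\delta_i$ and $(\Theta-\Theta')_{ii}\,x_i(|a|) = 0$. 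On indices with $a_i \neq 0$ the sign is locally constant and both are immediate; on indices with $a_i = 0$ one has $|a_i'| = |\delta_i| = {\rm sign}(\delta_i)\,\delta_i = {\rm sign}(a_i')\,\delta_i$, giving the first, while the second holds because $x_i(|a|) = 0$ from the previous step. This exact cancellation is the crux and shows the claimed set satisfies the expansion that characterises the HS-Jacobian.

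Finally I would confirm membership of $M_0$. Given the formula it suffices to show $P_0 \in \partial_{\rm HS}x(|a|)$, i.e. that $K = I(|a|)$ lies in ${\cal K}(|a|) = \{K : {\rm supp}(\mu(|a|)) \subseteq K \subseteq I(|a|)\}$. This is the inclusion ${\rm supp}(\mu(|a|)) \subseteq I(|a|)$, which is just complementary slackness in \eqref{eq: KKT_QP}: $\mu(|a|)_i \neq 0$ forces $x_i(|a|) = 0$, i.e. $i \in I(|a|)$. Hence $K = I(|a|)$ is the maximal admissible index set, $P_0 \in \partial_{\rm HS}x(|a|)$, and therefore $M_0 = \Theta P_0 \Theta \in \partial_{\rm HS}{\rm Prox}_{\rho\|w\circ\cdot\|_1^2}(a)$.

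The main obstacle is the non-smoothness of ${\rm sign}(\cdot)$ and $|\cdot|$ at the zero entries of $a$: there $\Theta$ is not locally constant, the naive chain rule breaks, and the convention ${\rm sign}(0) = 1$ is a priori arbitrary. The fact that rescues the argument is that vanishing entries of $a$ are active for the QP, so $x_i(|a|) = 0$ precisely where the sign is ambiguous; this simultaneously produces the cancellation in the expansion and renders the arbitrary choice of sign at zero immaterial to the value of $M_0$.
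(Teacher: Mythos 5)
Your proof is correct. The paper itself offers no argument for this theorem --- it simply asserts that the result follows from Propositions \ref{prop:proxmapping} and \ref{prop: QP_HS} --- so your write-up supplies exactly the details that are left implicit: the identification $a\mapsto \Theta\,x(\Theta a)$, the observation that $a_i=0$ forces $i\in I(|a|)$ (via $\bar\alpha\ge 0$ and $w>0$ in the closed form of Proposition \ref{prop:pos_prox}), and the resulting cancellation that makes the formal chain rule exact and renders the convention ${\rm sign}(0)=1$ immaterial. The expansion identity you verify, ${\rm Prox}(a')={\rm Prox}(a)+\Theta'P'\Theta'(a'-a)$ for $a'$ near $a$, is precisely the property the paper later invokes without justification in the proof of Proposition \ref{prop: semismooth_of_l12}, and your membership argument for $M_0$ (taking $K=I(|a|)$, admissible by complementary slackness in \eqref{eq: KKT_QP}) is the intended one.
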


For efficient implementation, we can use the result in the following proposition to compute $M_0$ in \eqref{eq:M0}, which is indeed a $0$-$1$ diagonal matrix plus a rank-one correction, and the proof can be found in the supplementary materials.
\begin{proposition}\label{compute_M}
	Define $\xi\in \mathbb{R}^{n}$ with $\xi_i = 0$ if $i\in I(|a|)$, and $\xi_i = 1$ otherwise, and $\Sigma={\rm Diag}(\xi)$. Denote $\tilde{w} =\Theta \Sigma w$, then $M_0$ defined in \eqref{eq:M0} can be computed as
	\begin{equation*}
	M_0 = \Sigma-
	\frac{2\rho}{1+2\rho (\tilde{w}^T\tilde{w})} \tilde{w}\tilde{w}^T.
	\end{equation*}
\end{proposition}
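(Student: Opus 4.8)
The plan is to reveal the active-set structure concealed in the definition of $P_0$ and then reduce the computation to a single Sherman--Morrison update. Write $I := I(|a|)$ for the active set and $J := \{1,\dots,n\}\setminus I$ for its complement. After relabelling the coordinates by a permutation (which merely reorders rows and columns and therefore does not alter the final matrix identity), we may assume the indices in $J$ come first. In this ordering $Q = I_n + 2\rho ww^T$ splits into blocks, with $(J,J)$-block $Q_{JJ} = I_{|J|} + 2\rho\, w_J w_J^T$, where $w_J$ is the subvector of $w$ indexed by $J$; write the corresponding blocks of $Q^{-1}$ as $A$ (the $(J,J)$-block), $B$ (the $(J,I)$-block) and $C$ (the $(I,I)$-block). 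Since $I_{I(|a|)}Q^{-1}I_{I(|a|)}^T = C$, and $Q^{-1}I_{I(|a|)}^T$ selects the $I$-indexed columns of $Q^{-1}$ (whose $J$- and $I$-rows are $B$ and $C$ respectively), a direct block multiplication shows that the $(J,I)$-, $(I,J)$- and $(I,I)$-blocks of $P_0$ all vanish, while its $(J,J)$-block equals $A - BC^{-1}B^T$.

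The first key step is to identify this Schur complement. Because the blocks $A,B,C$ assemble into $Q^{-1}$ itself, whose inverse $Q$ has $(J,J)$-block $Q_{JJ}$, the standard Schur-complement formula for the top-left block of an inverse gives $(A - BC^{-1}B^T)^{-1} = Q_{JJ}$, i.e. $A - BC^{-1}B^T = Q_{JJ}^{-1}$. Hence, in the reordered coordinates, $P_0$ is the block-diagonal matrix with $(J,J)$-block $Q_{JJ}^{-1}$ and all other entries zero. The second step applies Sherman--Morrison to $Q_{JJ} = I_{|J|} + 2\rho\, w_J w_J^T$, yielding $Q_{JJ}^{-1} = I_{|J|} - \frac{2\rho}{1 + 2\rho\, w_J^T w_J}\, w_J w_J^T$. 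Embedding this block back into an $n\times n$ matrix (zero-padding on $I$) and recalling that $\Sigma = {\rm Diag}(\xi)$ is exactly the selector of $J$, we obtain $P_0 = \Sigma - \frac{2\rho}{1 + 2\rho\,(\Sigma w)^T(\Sigma w)}\,(\Sigma w)(\Sigma w)^T$, since $\Sigma w$ is precisely the zero-padded version of $w_J$.

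It remains to conjugate by $\Theta = {\rm Diag}({\rm sign}(a))$. Using that $\Theta$ is diagonal with $\Theta^2 = I_n$, we have $\Theta\Sigma\Theta = \Sigma$ and $\Theta(\Sigma w)(\Sigma w)^T\Theta = \tilde{w}\tilde{w}^T$ with $\tilde{w} = \Theta\Sigma w$, while orthogonality of $\Theta$ gives $\tilde{w}^T\tilde{w} = (\Sigma w)^T(\Sigma w)$. Substituting these into $M_0 = \Theta P_0 \Theta$ produces exactly $M_0 = \Sigma - \frac{2\rho}{1 + 2\rho\,(\tilde{w}^T\tilde{w})}\,\tilde{w}\tilde{w}^T$, as claimed.

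I expect the main obstacle to be the first step, namely the reduction of $P_0$ to the block-diagonal form with $(J,J)$-block $Q_{JJ}^{-1}$. The vanishing of the $I$-indexed blocks follows from a short computation (for instance $P_0 I_{I(|a|)}^T = Q^{-1}I_{I(|a|)}^T - Q^{-1}I_{I(|a|)}^T (I_{I(|a|)}Q^{-1}I_{I(|a|)}^T)^{-1}(I_{I(|a|)}Q^{-1}I_{I(|a|)}^T) = 0$), but pinning down the surviving block as $Q_{JJ}^{-1}$ relies on the Schur-complement identity relating a block of a matrix to the complementary Schur complement of its inverse; once that identity is invoked, the remaining Sherman--Morrison and $\Theta$-conjugation steps are routine.
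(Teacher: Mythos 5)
Your proof is correct, and it reaches the result by a more self-contained route than the paper. The paper's proof first rewrites $I_{I(|a|)}^T\bigl(I_{I(|a|)}Q^{-1}I_{I(|a|)}^T\bigr)^{-1}I_{I(|a|)}$ as $(\Xi Q^{-1}\Xi)^{\dagger}$ with $\Xi=I_n-\Sigma$, then invokes a cited result (Proposition 3 of the fused-lasso paper of Li, Sun and Toh) to conclude $P_0=(\Sigma Q\Sigma)^{\dagger}$, and finally evaluates this pseudo-inverse of $\Sigma+2\rho\hat{w}\hat{w}^T$ (with $\hat{w}=\Sigma w$ in the range of $\Sigma$) by a Sherman--Morrison-type formula before conjugating by $\Theta$. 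You instead prove the key identity from scratch: you observe directly that the rows and columns of $P_0$ indexed by $I(|a|)$ vanish, identify the surviving $(J,J)$-block as the Schur complement $A-BC^{-1}B^T$ of the blocks of $Q^{-1}$, and use the standard block-inversion identity to recognize it as $Q_{JJ}^{-1}$; the zero-padded embedding of $Q_{JJ}^{-1}$ is precisely $(\Sigma Q\Sigma)^{\dagger}$, so the two arguments meet at the same intermediate object. Your version buys independence from the external citation and from pseudo-inverse calculus, at the cost of a coordinate relabelling and the Schur-complement bookkeeping; the paper's version is shorter given the imported lemma and keeps everything in the original coordinates. The remaining steps (Sherman--Morrison on $Q_{JJ}=I_{|J|}+2\rho w_Jw_J^T$, and the conjugation by $\Theta$ using $\Theta^2=I_n$ and $\tilde{w}^T\tilde{w}=(\Sigma w)^T(\Sigma w)$) coincide with the paper's.
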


The next proposition shows the strong semismoothness of ${\rm Prox}_{\rho\|w\circ\cdot\|_1^2}(\cdot)$.
\begin{proposition}\label{prop: semismooth_of_l12}
	${\rm Prox}_{\rho\|w\circ\cdot\|_1^2}(\cdot)$ is strongly semismooth with respect to $\partial_{\rm HS} {\rm Prox}_{\rho\|w\circ\cdot\|_1^2}(\cdot)$
\end{proposition}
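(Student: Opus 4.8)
The plan is to prove the stronger statement that $g := {\rm Prox}_{\rho\|w\circ\cdot\|_1^2}$ admits, on a neighborhood of every point, an exact affine representation $g(a)=g(\bar a)+M(a-\bar a)$ with $M\in\partial_{\rm HS}g(a)$; strong semismoothness with respect to $\partial_{\rm HS}g$ then follows at once, since the semismoothness residual will be identically zero. The starting point is the closed form of Proposition~\ref{prop:proxmapping},
\[
g(a) = {\rm sign}(a)\circ x(|a|) = \Theta_a\, x(|a|), \qquad \Theta_a := {\rm Diag}({\rm sign}(a)),\quad |a| = \Theta_a a,
\]
which realizes $g$ as the QP solution map $x(\cdot)$ precomposed with the coordinatewise absolute value and post-multiplied by the input-dependent sign matrix $\Theta_a$.

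Fix $\bar a\in\mathbb{R}^n$. First I would invoke Proposition~\ref{prop: QP_HS} for $x(\cdot)$ at the point $|\bar a|$: there is a neighborhood $U$ of $|\bar a|$ with ${\cal K}(b)\subseteq{\cal K}(|\bar a|)$ and
\[
x(b) = x(|\bar a|) + P\,(b - |\bar a|)\qquad \text{for all } b\in U,\ P\in\partial_{\rm HS}x(b).
\]
By continuity of $|\cdot|$, I restrict $a$ to a neighborhood of $\bar a$ on which $|a|\in U$, so that the displayed identity holds with $b = |a|$ for every $P\in\partial_{\rm HS}x(|a|)$. For each such $P$, with the corresponding $M=\Theta_a P\Theta_a\in\partial_{\rm HS}g(a)$ supplied by Theorem~\ref{thm: HS_jacobian}, substituting this identity into $g(a)=\Theta_a x(|a|)$ and regrouping (using $|a|=\Theta_a a$ and $|\bar a|=\Theta_{\bar a}\bar a$) yields
\[
g(a) - g(\bar a) - M(a-\bar a) = (\Theta_a - \Theta_{\bar a})\,x(|\bar a|) + \Theta_a P\,(\Theta_a - \Theta_{\bar a})\,\bar a .
\]
It then remains to show that both terms on the right vanish.

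The hard part, and the reason the discontinuity of ${\rm sign}$ is harmless, is the control of the diagonal matrix $\Theta_a-\Theta_{\bar a}$. On coordinates $j$ with $\bar a_j\neq 0$ the sign is locally constant, so $\Theta_a-\Theta_{\bar a}$ is supported only on $Z:=\{j:\bar a_j=0\}$. The second term vanishes because $(\Theta_a-\Theta_{\bar a})\bar a$ is supported on $Z$ while $\bar a_j=0$ there. For the first term I would use the closed form of Proposition~\ref{prop:pos_prox}: for $j\in Z$ one has $x(|\bar a|)_j=(|\bar a_j|-2\rho\bar\alpha w_j)^+=(-2\rho\bar\alpha w_j)^+=0$, since $w_j>0$ and $\bar\alpha\geq \alpha_n = s_n/(1+2\rho L_n)>0$ whenever $\bar a\neq 0$ (and $x(|\bar a|)=x(0)=0$ in the degenerate case $\bar a=0$); hence $x(|\bar a|)$ vanishes on the support of $\Theta_a-\Theta_{\bar a}$, killing the first term as well. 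In short, zero-valued input coordinates are inactive, so the discontinuous sign there is always multiplied by a vanishing quantity.

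Combining these facts gives $g(a)=g(\bar a)+M(a-\bar a)$ for \emph{every} $M\in\partial_{\rm HS}g(a)$ and all $a$ near $\bar a$, i.e.\ $g$ is piecewise affine and $\partial_{\rm HS}g$ captures its exact local slope. Strong semismoothness with respect to $\partial_{\rm HS}g$ then follows immediately, because the residual $g(\bar a+h)-g(\bar a)-M h$ is identically $0=O(\|h\|^2)$ for every $M\in\partial_{\rm HS}g(\bar a+h)$; directional differentiability, the remaining ingredient of the definition, is inherited from this local affine representation together with the $1$-Lipschitz continuity of $g$.
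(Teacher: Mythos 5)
Your proof is correct and follows essentially the same route as the paper's: both deduce from Proposition~\ref{prop: QP_HS} that ${\rm Prox}_{\rho\|w\circ\cdot\|_1^2}$ admits an exact local affine representation with slopes in $\partial_{\rm HS}{\rm Prox}_{\rho\|w\circ\cdot\|_1^2}$, so the semismoothness residual vanishes identically. The only difference is that you carefully verify the step the paper leaves implicit --- that the sign matrices $\Theta_a-\Theta_{\bar a}$ only act on coordinates where both $\bar a$ and $x(|\bar a|)$ vanish --- which is a worthwhile addition rather than a deviation.
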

\begin{proof}
	As one can see, ${\rm Prox}_{\rho\|w\circ\cdot\|_1^2}(\cdot)$ is piecewise linear and Lipschitz continuous, thus it is directionally differentiable by \cite{facchinei2007finite}. From Proposition \ref{prop: QP_HS}, we know that there exists a neighborhood ${\cal U}$ of $a$ such that for all $a'\in {\cal U}$,
	\begin{align*}
	{\rm Prox}_{\rho\|w\circ\cdot\|_1^2}(a')-{\rm Prox}_{\rho\|w\circ\cdot\|_1^2}(a)-M(a'-a)=0,\quad \forall M\in \partial_{\rm HS} {\rm Prox}_{\rho\|w\circ\cdot\|_1^2}(a').
	\end{align*}
	Therefore, ${\rm Prox}_{\rho\|w\circ\cdot\|_1^2}(\cdot)$ is strongly semismooth with respect to $\partial_{\rm HS} {\rm Prox}_{\rho\|w\circ\cdot\|_1^2}(\cdot)$.
\end{proof}

\subsection{${\rm Prox}_p(\cdot)$ and its generalized Jacobian}
For $p(\cdot)=\lambda \Delta^{\mathcal{G},w}(\cdot)$, in order to explicitly give the closed-form solution to ${\rm Prox}_p(\cdot)$ and its generalized Jacobian, we need the following notations. For $i=1\cdots,l$, we define the linear mapping ${\cal P}_i:\mathbb{R}^n \rightarrow \mathbb{R}^{|g_i|}$ as ${\cal P}_i x=x_{g_i}$ for all $x\in \mathbb{R}^n$, and ${\cal P}=[{\cal P}_1;\cdots;{\cal P}_l]$. Let $n_i=\sum_{k=1}^i |g_k|$ and $n_0=1$. Denote $x^{(i)}$ as the sub-vector extracted from $x$ based on the index set $\{n_{i-1},n_{i-1}+1,\cdots,n_{i}\}$.

Based on these notations, the proximal mapping ${\rm Prox}_p(\cdot)$ can be computed as
\begin{align*}
{\rm Prox}_p(x)&={\cal P}^T
\arg\min_{y\in \mathbb{R}^n}\Big\{
\frac{1}{2}\|y-{\cal P}x\|^2+\lambda \sum_{i=1}^l \|({\cal P} w)^{(i)} \circ y^{(i)}\|_1^2\Big\}\\
&={\cal P}^T [{\rm Prox}_{\lambda\|({\cal P} w)^{(1)}\circ\cdot\|_1^2}(({\cal P} x)^{(1)});\cdots;{\rm Prox}_{\lambda\|({\cal P} w)^{(l)}\circ\cdot\|_1^2}(({\cal P} x)^{(l)})].
\end{align*}
In addition,
\begin{align*}
V={\cal P}^T {\rm Diag}(\Sigma_1-
\frac{2\lambda}{1+2\lambda (\tilde{w}_1^T\tilde{w}_1)} \tilde{w}_1\tilde{w}_1^T,\cdots,\Sigma_l-
\frac{2\lambda}{1+2\lambda (\tilde{w}_l^T\tilde{w}_l)} \tilde{w}_l\tilde{w}_l^T)   {\cal P}
\end{align*}
is an element in $\partial_{\rm HS}{\rm Prox}_p(x)$, where $\Sigma_i$, $\tilde{w}_i$ corresponds to $M_0$ for $\partial_{\rm HS} {\rm Prox}_{\lambda\|({\cal P} w)^{(i)}\circ\cdot\|_1^2}(({\cal P} x)^{(i)})$. The strong semismoothness of ${\rm Prox}_p(\cdot)$ w.r.t. $\partial_{\rm HS}{\rm Prox}_p(\cdot)$ follows naturally.

\section{Numerical experiments}
\label{sec:numerical}
In this section, we perform some numerical experiments to test our proposed PPDNA in solving the exclusive lasso model. For simplicity, we take the weight vector $w$ to be all ones. The exclusive lasso model can be described as
\begin{equation}\label{eq: exclusive-lasso}
\min_{x\in \mathbb{R}^n} \Big\{h(Ax) + \lambda \sum_{g\in \mathcal{G}} \| x_g\|_1^2\Big\},
\end{equation}
where $\mathcal{G} = \{g \mid g \subseteq \{1, 2, \dots, n\} \}$ is a disjoint partition of $\{1, 2, \dots, n\}$, $A\in \mathbb{R}^{m\times n}$ and $\lambda>0$. By taking $c = 0$, and $p(x) = \lambda \sum_{g\in \mathcal{G}} \| x_g\|_1^2$, we can reformulate \eqref{eq: exclusive-lasso} in the form of \eqref{eq: Convex-composite-program}. Thus, all the analyses in previous sections are applicable for the above model.  All our computational results are obtained by running {\sc Matlab} on a windows workstation (12-core, Intel Xeon E5-2680 @ 2.50GHz, 128G RAM).

In the numerical experiments, we mainly focus on two aspects. (1) We compare our proposed PPDNA for solving \eqref{eq: exclusive-lasso} to three popular state-of-the-art first-order frameworks, ILSA \cite{kong2014exclusive}, ADMM with step length $\kappa = 1.618$ \cite{fazel2013hankel} and APG with restart under the setting described in \cite{becker2011templates}.  To demonstrate the efficiency and scalability of PPDNA, we perform the time comparison on synthetic datasets from small to large scales. (2) We apply the exclusive lasso model \eqref{eq: exclusive-lasso} with least squares loss function to index ETF (exchange traded fund) in finance. The out-of-sample results show the superior performance of the exclusive lasso model in index tracking, comparing to the lasso and group lasso models.

We stop all the four algorithms by the following criterion  based on the relative KKT residual:
$$\eta_{\rm KKT} := \frac{\|x - {\rm Prox}_{p}(x - A^T\nabla h(Ax)\|}{1 + \|x\| + \|A^T\nabla h(Ax)\|} \leq \varepsilon,$$
where $\varepsilon > 0$ is a given tolerance, which is set to $10^{-6}$ in our experiments. We also terminate PPDNA when it reach the maximum iteration of $200$ and terminate ILSA, ADMM and APG when they reach the maximum iteration of $200000$ unless otherwise specified. In addition, we set the maximum computation time as $1$ hour.

\subsection{The regularized linear regression problem with synthetic data}
In this subsection, we test the efficiency of PPDNA for solving \eqref{eq: exclusive-lasso} with $h(y):=\sum_{i=1}^m (y_i-b_i)^2/2$, and compare it against ILSA, ADMM and APG on synthetic datasets. In this case, we apply the classical PPDNA, i.e. ${\cal M}_k\equiv I_n$ for all $k$. Here we focus on the time comparison among the algorithms. For the comparison of prediction error among the exclusive lasso, lasso and other linear regression models, we refer the readers to \cite{campbell2017within} for more details.

We adopt the design of synthetic datasets as described in \cite{campbell2017within}. We generate the synthetic data using the model $b = Ax ^* +\epsilon$, where $x ^*$ is the predefined true solution and $\epsilon \sim \mathcal{N}(0, I_m)$ is a random noise vector. Given the number of observations $m$, the number of groups $s$ and the number of features $p$ in each group, we generate each row of the matrix $A \in \mathbb{R}^{m \times sp}$ by sampling a vector from a multivariate normal distribution $\mathcal{N}(0, \Sigma)$, where $\Sigma$ is a Toeplitz covariance matrix with entries $\Sigma_{ij} = 0.9^{|i - j|}$ for features in the same group, and $\Sigma_{ij} = 0.3^{|i - j|}$ for features in different groups. For the ground-truth $x^{*}$, we randomly generate $10$ nonzero elements in each group with i.i.d values from the uniform distribution on $[0,10]$.

We mainly focus on feature selection by the exclusive lasso model in the high-dimensional settings. Hence, we fix $m$ to be $200$ and $s$ to be $20$, but vary the number of features $p$ in each group from $50$ to $1000$. That is, we vary the total number of features $n=sp$ from $1000$ to $20000$. To compare the robustness of different algorithms with respect to the parameter $\lambda$, we test all the algorithms under two different values of $\lambda$. The results are shown in Figure \ref{fig: time-comparison}, which demonstrate the superior performance of PPDNA, especially for large-scale instances, comparing to ILSA, ADMM and APG.

More results on higher dimensional cases are shown in Table \ref{tab: ls}. As one can see from Figure \ref{fig: time-comparison}, APG and ILSA are not efficient enough to solve large-scale instances, thus we only compare PPDNA with ADMM in these higher-dimensional cases. For the largest two instances in Table \ref{tab: ls}, PPDNA is able to solve the problems in two minutes whereas ADMM fails to solve them even after $1$ hour.

\begin{figure*}[!h]
	\begin{center}
		\includegraphics[width = 1\columnwidth]{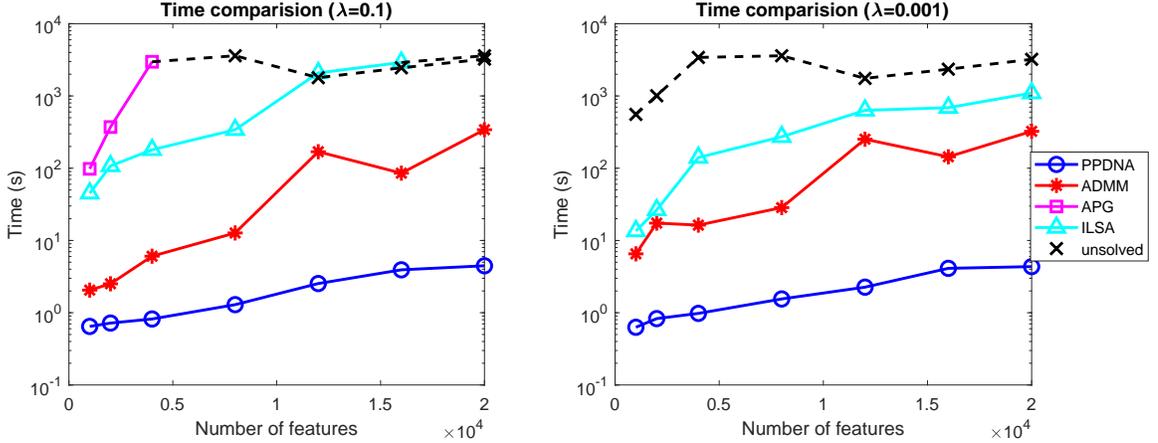}
		\setlength{\abovecaptionskip}{-10pt}
		\setlength{\belowcaptionskip}{-10pt}
		\caption{Time comparison among PPDNA, ILSA, ADMM and APG for linear regression on synthetic datasets. The black dash line with $'\times'$ indicates that the algorithm fails to solve the instance.}
		\label{fig: time-comparison}
	\end{center}
\end{figure*}

\begin{table*}[!h]\fontsize{8pt}{11pt}\selectfont
	\setlength{\abovecaptionskip}{0pt}
	\setlength{\belowcaptionskip}{0pt}
	\caption{Comparison between PPDNA and ADMM for linear regression on synthetic datasets. In the table, ``23(106)" means ``PPDNA iterations (total inner SSN iterations)". Time is in the format of (hours:minutes:seconds). Values in bold means that the algorithm fails to solve the instance to the required accuracy.}\label{tab: ls}
	\renewcommand\arraystretch{1.2}
	\centering
	\begin{tabular}{|c|c|c|c|c|}
		\hline
		& & iter &  $\eta_{\rm KKT}$ & time\\
		\hline
		Data $(m,s,p)$  & $\lambda$  & PPDNA $|$ ADMM & PPDNA $|$ ADMM & PPDNA $|$ ADMM \\
		\hline
		\multirow{2}*{\tabincell{c}{$(500,20,2000)$} }
		&1e-1 & 23(106) $|$ 23332  & 8.5e-7 $|$ 1.0e-6 & 0:00:27 $|$ 0:08:08  \\
		&1e-3 & 30(95) $|$ 167472  & 6.3e-7 $|$ \bf{1.5e-6}  & 0:00:24 $|$ 1:00:00 \\
		\hline
		\multirow{2}*{\tabincell{c}{$(500,20,3000)$} }
		&1e-1 & 23(97) $|$ 46226  & 3.9e-7 $|$ \bf{2.1e-6} & 0:00:36 $|$ 1:00:00  \\
		&1e-3 & 29(100) $|$ 50402  & 7.9e-7 $|$ \bf{9.0e-6} & 0:00:37 $|$ 1:00:01 \\
		\hline
		\multirow{2}*{\tabincell{c}{$(1000,20,2000)$} }
		&1e-1 & 21(132) $|$ 16208  & 5.0e-7 $|$ 1.0e-6 & 0:01:21 $|$ 0:09:03  \\
		&1e-3 & 28(160) $|$ 89242  & 7.8e-7 $|$ 1.0e-6 & 0:01:39 $|$ 0:50:41 \\
		\hline
		\multirow{2}*{\tabincell{c}{$(1000,20,4000)$} }
		&1e-1 & 22(107) $|$ 15644  & 7.1e-7 $|$ \bf{1.2e-5} & 0:01:44 $|$ 1:00:00  \\
		&1e-3 & 29(126) $|$ 15680  & 9.7e-7 $|$ \bf{3.6e-3} & 0:01:59 $|$ 1:00:01 \\
		\hline
	\end{tabular}
\end{table*}

\subsection{The regularized logistic regression problem with synthetic data}
In this subsection, we show the performance of PPDNA for solving the logistic regression model with the exclusive lasso regularizer. The logistic regression model could be formulated by taking
\[
h(y) = \mbox{$\sum_{i=1}^m$} \log(1 + \exp(-b_i y_i))
\]
in \eqref{eq: exclusive-lasso}, where $b\in \{-1,1\}^m$ is given. For robustness, we apply the preconditioned PPA with ${\cal M}_k\equiv I_n+\tau A^TA$ to solve this exclusive lasso model with $\tau=1/\lambda_{\max}(AA^T)$.

We use the same design of synthetic datasets described in the previous subsection, except for letting $b_i=1$ if $ Ax ^* +\epsilon\geq 0$, and $-1$ otherwise. As one can see in the previous subsection, APG and ILSA are very time-consuming when solving large-scale exclusive lasso problems compared to PPDNA and ADMM. Thus for logistic regression problems, we only compare PPDNA with ADMM. The description of the ADMM for solving the regularized logistic regression problem could be found in the supplementary materials. The numerical results are shown in Table \ref{tab: logistic}. Again, we can observe the superior performance of PPDNA against ADMM, and the performance gap is especially wide when the parameter $\lambda=10^{-5}$. For example, PPDNA is at least $50$ times faster than ADMM in solving the instance $(500,20,5000)$ with $\lambda=10^{-5}$.

\begin{table*}[!h]\fontsize{8pt}{11pt}\selectfont
	\caption{Time comparison between PPDNA and ADMM for logistic regression on synthetic datasets.}\label{tab: logistic}
	\renewcommand\arraystretch{1.2}
	\centering
	\begin{tabular}{|c|c|c|c|c|}
		\hline
		& & iter &  $\eta_{\rm KKT}$ & time\\
		\hline
		Data $(m,s,p)$  & $\lambda$  & PPDNA $|$ ADMM  & PPDNA $|$ ADMM & PPDNA $|$ ADMM \\
		\hline
		\multirow{3}*{\tabincell{c}{$(500,20,3000)$} }
		&1e-1 & 13(41) $|$ 1689  & 5.2e-7 $|$ 1.0e-6 & 0:00:14 $|$ 0:02:27  \\
		&1e-3 & 48(58) $|$ 5850  & 9.4e-7 $|$ 1.0e-6 & 0:00:20 $|$ 0:06:38 \\
		&1e-5 & 73(75) $|$ 17208  & 9.3e-7 $|$ 1.0e-6 & 0:00:27 $|$ 0:16:52 \\
		\hline
		\multirow{3}*{\tabincell{c}{$(500,20,5000)$} }
		&1e-1 & 12(45) $|$ 2167  & 2.6e-7 $|$ 1.0e-6 & 0:00:24 $|$ 0:04:45  \\
		&1e-3 & 37(51) $|$ 6187  & 2.1e-7 $|$ 1.0e-6 & 0:00:28 $|$ 0:10:17 \\
		&1e-5 & 67(68) $|$ 21584  & 8.9e-7 $|$ 1.0e-6 & 0:00:39 $|$ 0:33:54 \\
		\hline
		\multirow{3}*{\tabincell{c}{$(1000,20,5000)$} }
		&1e-1 & 13(46) $|$ 1186  & 2.9e-7 $|$ 1.0e-6 & 0:00:52 $|$ 0:06:12  \\
		&1e-3 & 47(62) $|$ 5593  & 6.6e-7 $|$ 1.0e-6 & 0:01:10 $|$ 0:22:45 \\
		&1e-5 & 66(68) $|$ 17829  & 9.9e-7 $|$ \bf{2.2e-6} & 0:01:23 $|$ 1:00:00 \\
		\hline
		\multirow{3}*{\tabincell{c}{$(1000,20,8000)$} }
		&1e-1 & 13(50) $|$ 1947  & 9.1e-8 $|$ 1.0e-6 & 0:01:24 $|$ 0:14:02  \\
		&1e-3 & 57(69) $|$ 6991  & 9.2e-7 $|$ 1.0e-6 & 0:02:00 $|$ 0:39:01 \\
		&1e-5 & 89(90) $|$ 10519  & 9.7e-7 $|$ \bf{1.4e-5} & 0:02:40 $|$ 1:00:00 \\
		\hline
		\multirow{3}*{\tabincell{c}{$(2000,20,10000)$} }
		&1e-1 & 11(48) $|$ 1625  & 7.3e-7 $|$ 1.0e-6 & 0:03:23 $|$ 0:33:02  \\
		&1e-3 & 62(72) $|$ 3522  & 6.0e-7 $|$ \bf{5.8e-5} & 0:05:17 $|$ 1:00:05 \\
		&1e-5 & 79(80) $|$ 4415  & 9.9e-7 $|$ \bf{2.5e-4} & 0:06:27 $|$ 1:00:05 \\
		\hline
	\end{tabular}
\end{table*}

\subsection{Application: index exchange-traded fund (index ETF)}
In this subsection, we apply the exclusive lasso model in a real application in finance. Consider the portfolio selection problem where a fund manager wants to select a small subset of stocks (to minimize transaction costs and business analyses) to track a target time series such as the S\&P 500 index. Furthermore, in order to diversify the risks, the portfolio is required to span across all sectors. Such an application naturally leads us to consider the exclusive lasso model.

\begin{figure*}[!h]
	\begin{center}
		\includegraphics[width = 0.4\columnwidth]{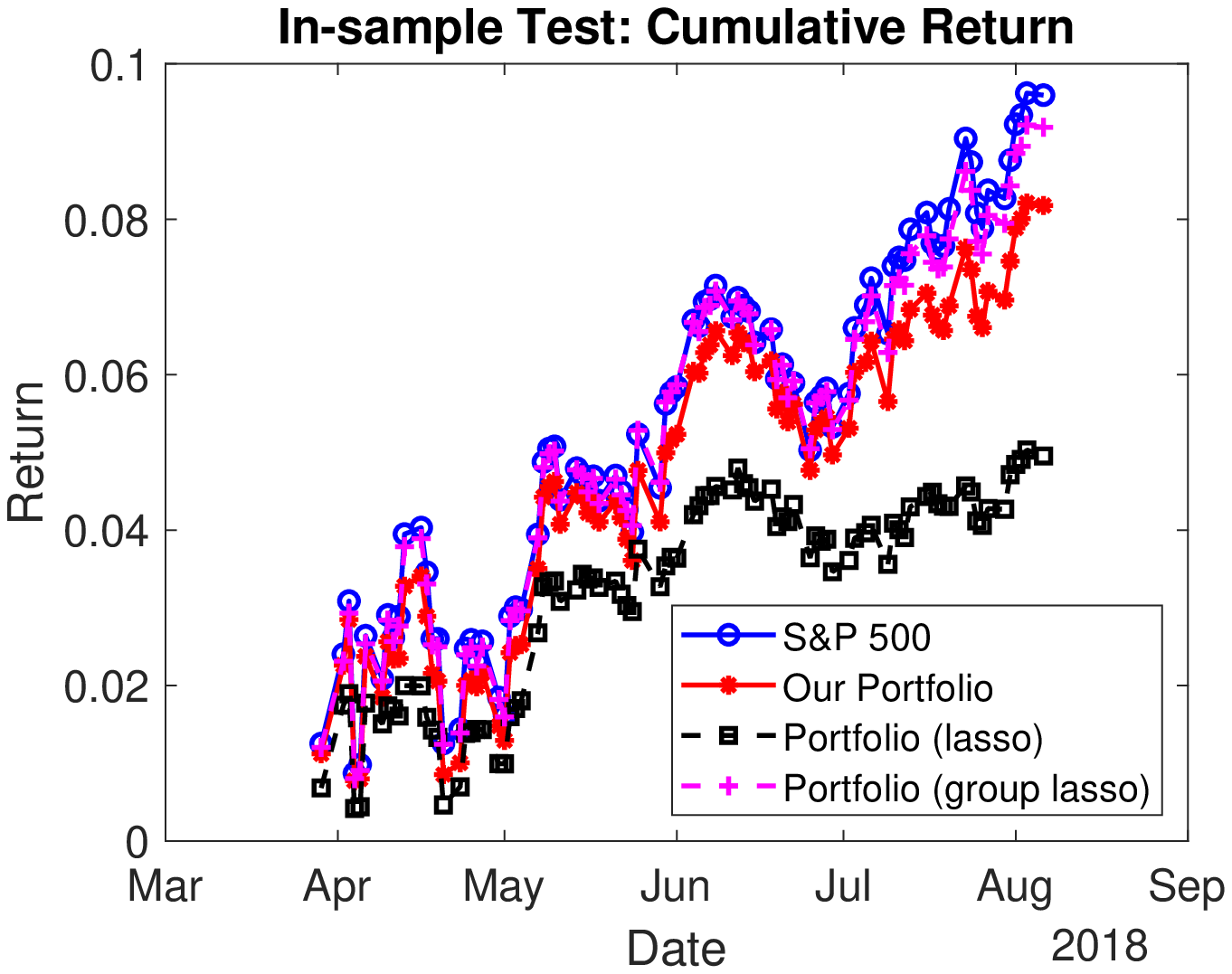}
		\quad
		\includegraphics[width = 0.43\columnwidth]{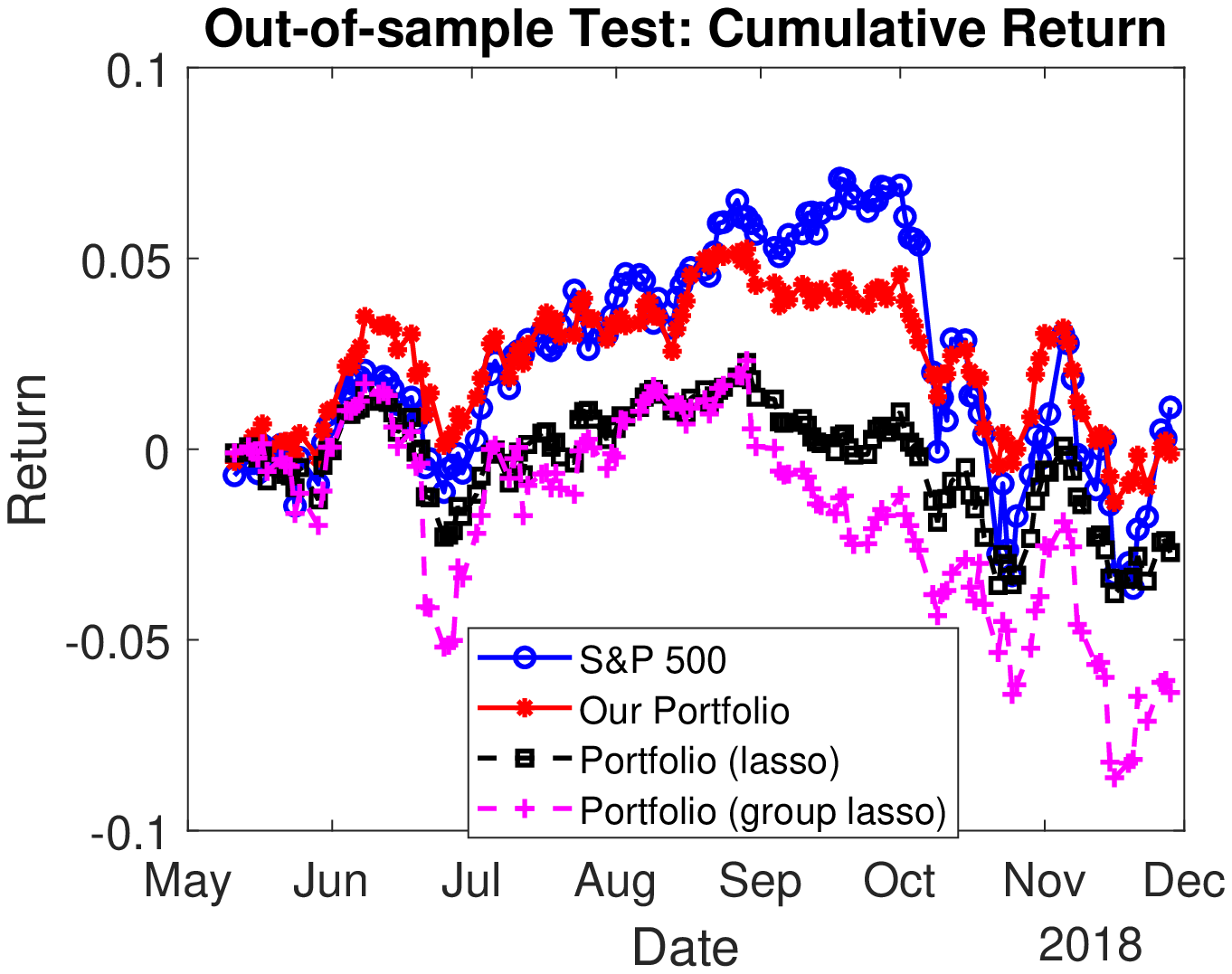}
		\setlength{\abovecaptionskip}{0pt}
		\setlength{\belowcaptionskip}{0pt}
		\caption{In-sample and out-of-sample performance of the exclusive lasso, the group lasso and the lasso model for index tracking of S\&P 500.}
		\label{fig: partial-index-tracking}
	\end{center}
\end{figure*}

\begin{figure}[!h]
	\begin{center}
		\includegraphics[width = 0.47\columnwidth]{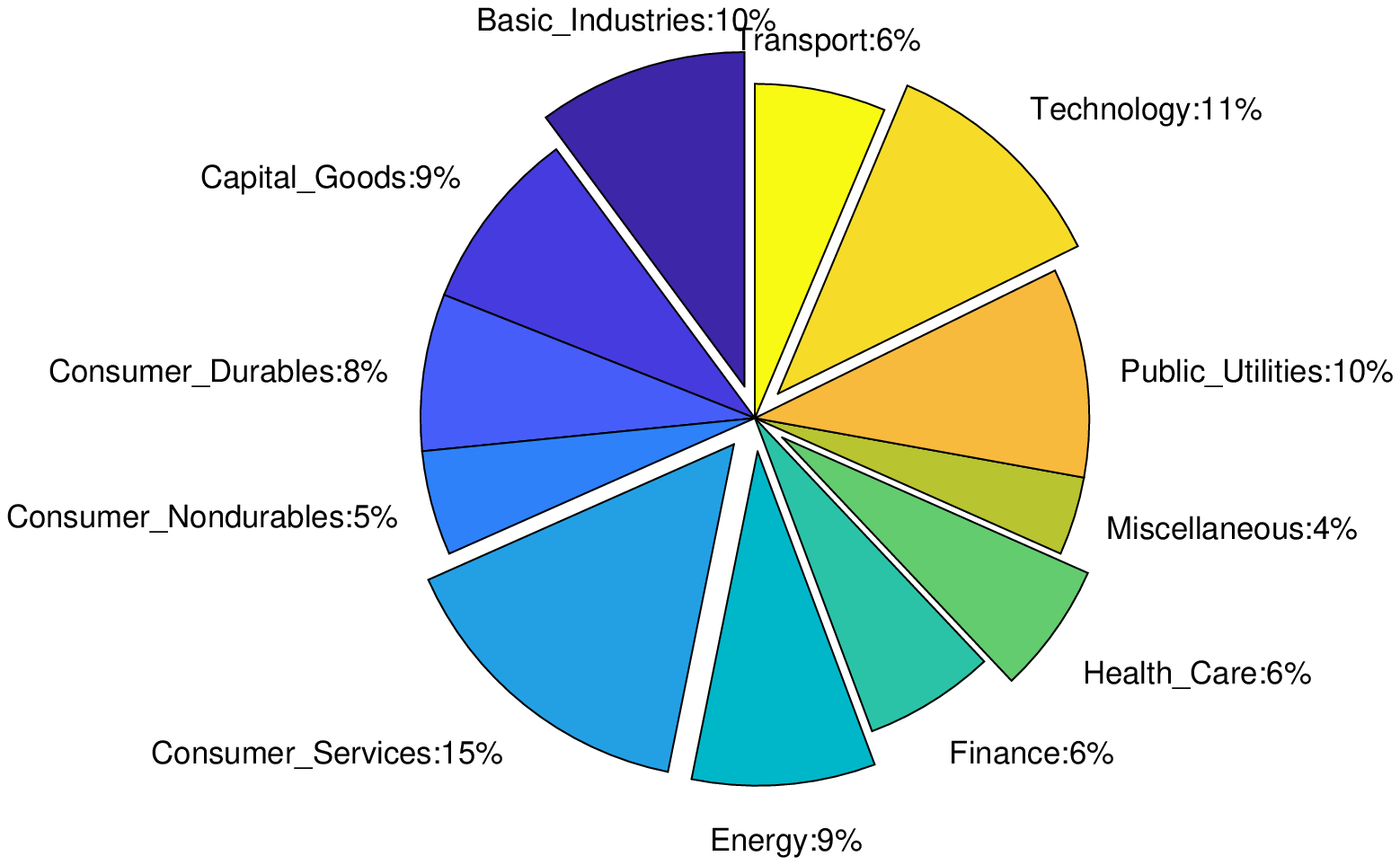}
		\quad
		\includegraphics[width = 0.47\columnwidth]{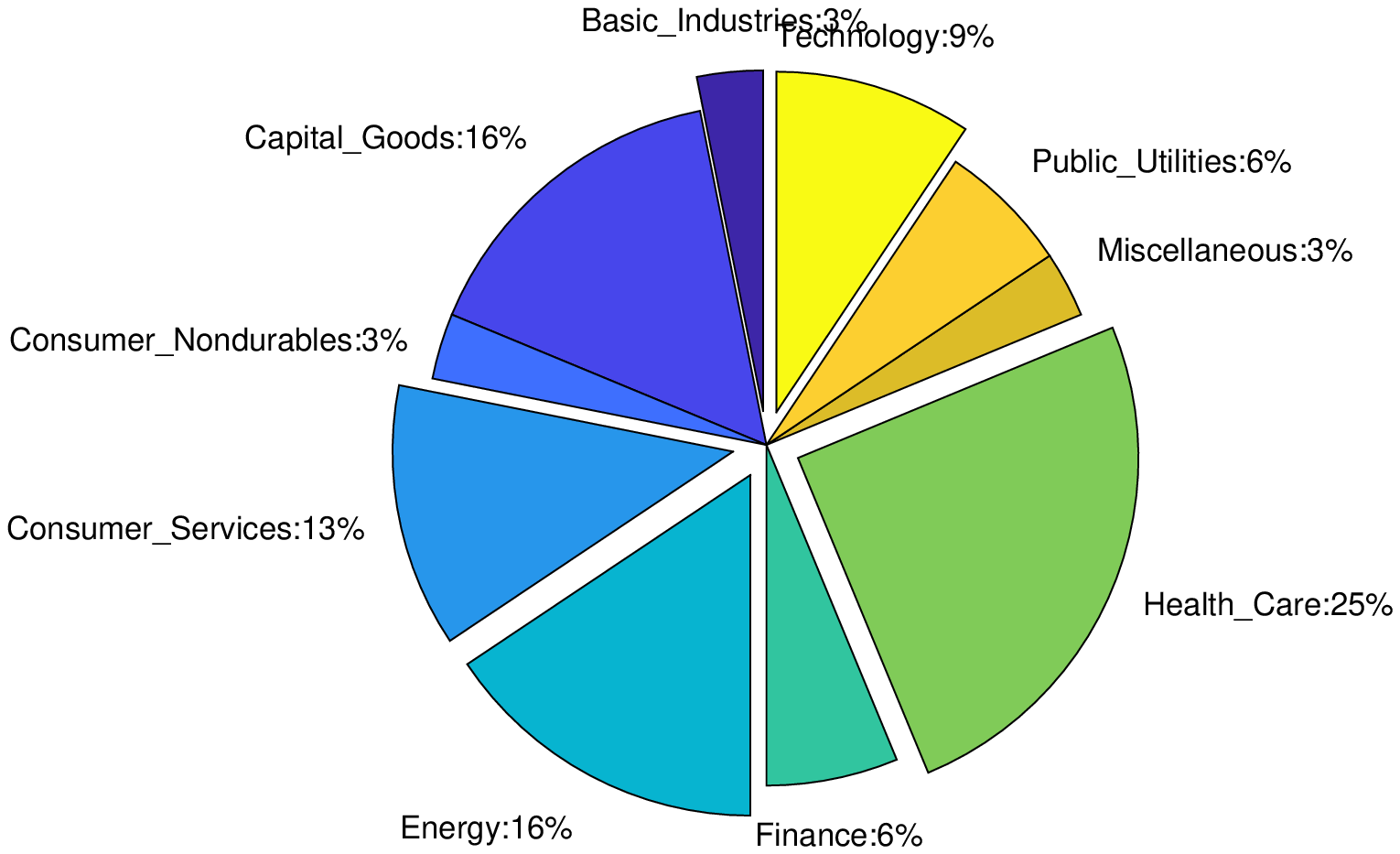}
		\vskip 0.2in
		\includegraphics[width = 0.47\columnwidth]{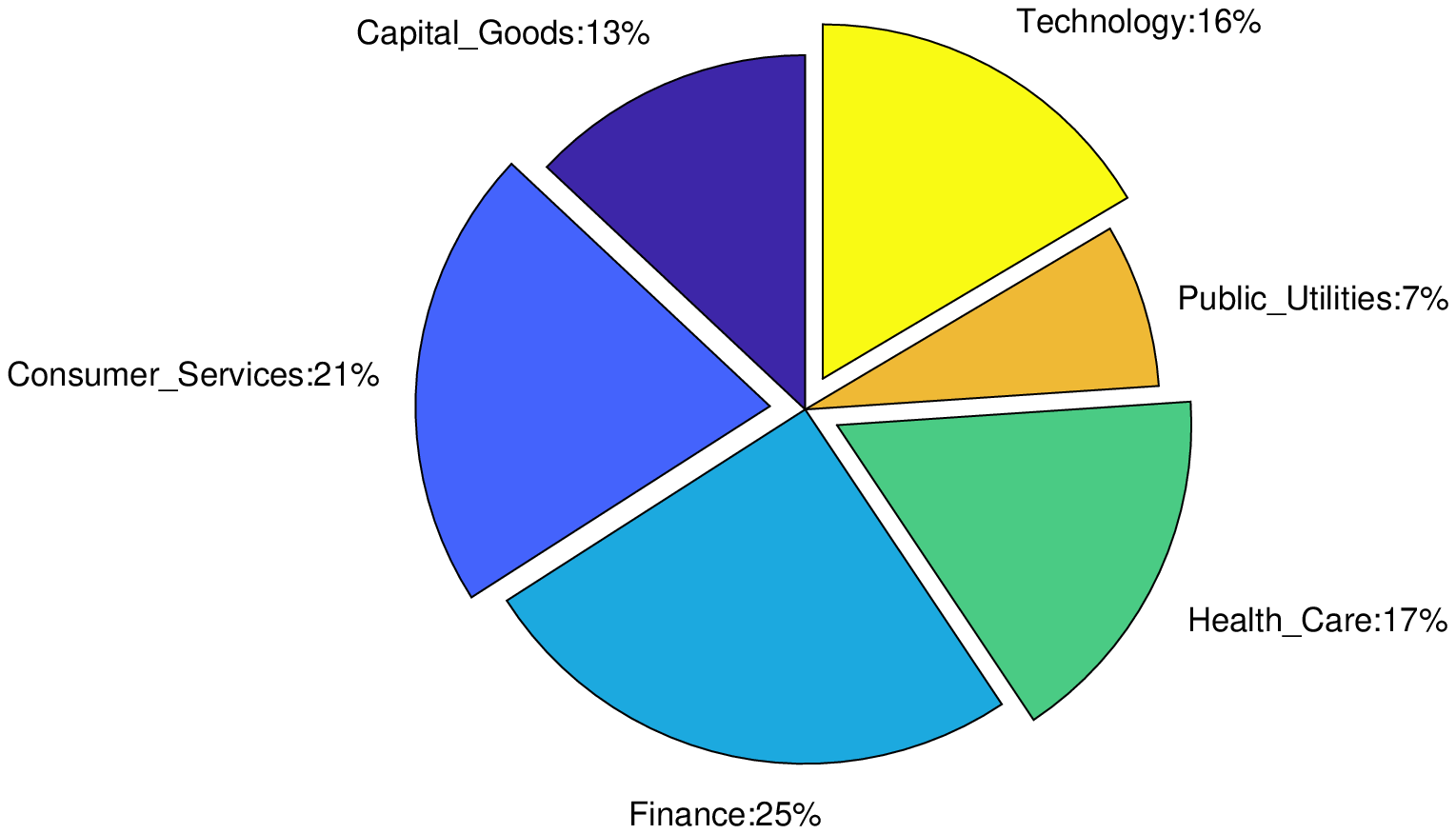}
		\setlength{\abovecaptionskip}{4pt}
		\setlength{\belowcaptionskip}{0pt}
		\caption{Percentage of selected stocks by sectors. Top left: exclusive lasso model. Top right: lasso model. Bottom: group lasso model.}
		\label{fig: pie-fig}
	\end{center}
\end{figure}

In our experiments, we download all the stock price data in the US market between 2018-01-01 and 2018-12-31 (251 trading days) from Yahoo finance \cite{yahoo-finance2018}. We drop the stock if more than 10\% of its price data is missing. After that, we get 3074 stocks in our stock universe. For the remaining stocks,  we handle the missing data via the common practice of forward interpolation. We then compute the daily return and get the historical {return} matrix $R \in \mathbb{R}^{250 \times 3074}$. We try to build a portfolio to track the S\&P 500  index. Let $y \in \mathbb{R}^{250}$ be the daily return of the S\&P 500 index in 2018. Since there are 12 sectors in the US market (e.g., finance, healthcare, technology, etc.), we have a natural group partition for our stock universe as $\mathcal{G}_{\rm US} = \{g_1, g_2, \dots, g_{12}\}$, where $g_i$ is the index set for stocks in the $i$-th sector.

To test the performance of the exclusive lasso model in index tracking, we use the rolling window method to test the in-sample and out-of-sample performance of the model. We use the historical data in the last 90 trading days to estimate a portfolio vector via the model for the future 10 days. More specifically, at day T, we solve the following problem\footnote{We explain why we could drop the constraints: $x \geq 0$ and $\sum_{i} x_i = 1$ in the supplementary materials.}:
\begin{align*}
x^*_T = \arg\min_{x} \frac{1}{2}\|  R_T x- y_T \|_2^2 + \lambda_T \mbox{$\sum_{g\in \mathcal{G}_{\rm US}}$} \|x_g\|_1^2,
\end{align*}
where $R_T$, $y_T$ are the daily return matrix of all stocks and daily return vector of S\&P 500 index in the last 90 trading days prior to day T, respectively. We select the parameter $\lambda_T$ using 9-folds cross validation. After we get the estimated portfolio vector $x^*_T$, we invest in the market based on it for the next 10 trading days. The in-sample and out-of-sample performance of the exclusive lasso model, the lasso model and the group lasso model is shown in Figure \ref{fig: partial-index-tracking}.

We plot the percentage of stocks from each sector in the portfolio obtained from the three tested models in Figure \ref{fig: pie-fig}. The result shows that our exclusive  lasso model can select stocks from all the 12 sectors, but the lasso model selects stocks only from 10 sectors and the group lasso model selects stocks only from 6 sectors in the universe. Moreover, the out-of-sample performance of the exclusive lasso model is visibly better than those corresponding to the lasso and group lasso models.

\section{Conclusion}
\label{sec:conclusion}
In this paper, we provide a rigorous proof for the closed-form solution to the proximal mapping of the exclusive lasso regularizer and derive its corresponding HS-Jacobian. Based on these theoretical results, we design a highly efficient and scalable second-order type algorithm (PPDNA) to solve the exclusive lasso model. Numerical results show that our PPDNA is far more efficient and robust  than popular first-order methods such as ADMM and APG methods. We apply the exclusive lasso model in an index ETF portfolio selection problem, and demonstrate that it can  achieve better out-of-sample performance comparing to the lasso model and group lasso model.

\newpage
\appendix
\section*{Appendices}
\section{Definition of semismoothness}
The concept of semismoothness is as follows, which can be found in \cite{kummer1988newton,mifflin1977semismooth,qi1993nonsmooth,sun2002semismooth}.

\begin{definition}(Semismoothness)
	For a given open set $\mathcal{O} \subseteq \mathbb{R}^n$, let $F:\mathcal{O} \rightarrow \mathbb{R}^m$ be a locally Lipschitz continuous function and  $\mathcal{G}: \mathcal{O} \rightrightarrows \mathbb{R}^{m \times n}$ be a nonempty compact valued upper-semicontinuous multifunction. $F$ is said to be semismooth at $x \in \mathcal{O}$ with respect to the multifunction $\mathcal{G}$ if $F$ is directionally differentiable at $x$ and for any $V \in \mathcal{G}(x+ \Delta x)$ with $\Delta x\rightarrow 0$,
	\[
	F(x+\Delta x) - F(x) - V\Delta x = o(\|\Delta x\|).
	\]
	$F$ is said to be strongly semismooth at $x \in \mathcal{O}$ with respect to $\mathcal{G}$ if it is semismooth at $x$ with respect to $\mathcal{G}$ and
	\[
	F(x+\Delta x) - F(x) - V\Delta x = O(\|\Delta x\|^2).
	\]
	$F$ is said to be semismooth (respectively, strongly semismooth) on $\mathcal{O}$ with respect to $\mathcal{G}$ if it is semismooth (respectively, strongly semismooth) everywhere in $\mathcal{O}$ with respect to $\mathcal{G}$.
\end{definition}

\section{Proof of Proposition \ref{prop:stop_of_ATA}}
By noting that 
\[
f_k(x)=f(x)+\frac{1}{2\sigma_k}\|x-x^k\|_{{\cal M}_k}^2,
\]
we know from \cite[Exercise 8.8]{rockafellar2009variational} that
\[
\partial f_k(x)=\partial f(x)+\frac{1}{\sigma_k} {\cal M}_k(x-x^k).
\]
Since ${\cal P}_k(x^k)=\arg\min f_k(x)$, we have that $0\in \partial f_k({\cal P}_k(x^k))$, which means there exists $v\in \partial f({\cal P}_k(x^k))$ such that 
\[
0=v+\frac{1}{\sigma_k} {\cal M}_k({\cal P}_k(x^k)-x^k).
\]
Since $f_k({\cal P}_k(x^k))=\inf f_k$, it holds that
\begin{align*}
f_k(x^{k+1})-\inf f_k&=f(x^{k+1})-f({\cal P}_k(x^k))+\frac{1}{2\sigma_k}\|x^{k+1}-x^k\|_{{\cal M}_k}^2-\frac{1}{2\sigma_k}\|{\cal P}_k(x^k)-x^k\|_{{\cal M}_k}^2\\
&=f(x^{k+1})-f({\cal P}_k(x^k))+\frac{1}{2\sigma_k}\langle x^{k+1}+{\cal P}_k(x^k)-2x_k,x^{k+1}-{\cal P}_k(x^k)\rangle_{{\cal M}_k}\\
&\geq \langle v,x^{k+1}-{\cal P}_k(x^k)\rangle+\frac{1}{2\sigma_k}\langle x^{k+1}+{\cal P}_k(x^k)-2x_k,x^{k+1}-{\cal P}_k(x^k)\rangle_{{\cal M}_k}\\
&=\frac{1}{2\sigma_k}\|x^{k+1}-{\cal P}_k(x^k)\|_{{\cal M}_k}^2.
\end{align*}
By the strongly duality, we know that $\inf f_k=\sup \psi_k$, thus
\[
\frac{1}{2\sigma_k}\|x^{k+1}-{\cal P}_k(x^k)\|_{{\cal M}_k}^2\leq f_k(x^{k+1})-\inf f_k=f_k(x^{k+1})-\sup \psi_k\leq f_k(x^{k+1})-\psi_k(u^{k+1}).
\]
Therefore, the stopping criteria \eqref{eq:stopA_pre_ppa} and \eqref{eq:stopB_pre_ppa} can be achieved by:
\begin{align}
f_k(x^{k+1})-\psi_k(u^{k+1})&\leq \frac{\epsilon_k^2}{2\sigma_k},\quad \epsilon_k \geq 0,\quad \sum_{k=0}^{\infty}\epsilon_k <\infty,\tag{A''}\\
f_k(x^{k+1})-\psi_k(u^{k+1})&\leq \frac{\delta_k^2}{2\sigma_k}\|x^{k+1}-x^k\|_{{\cal M}_k}^2,\quad 0\leq \delta_k < 1,\quad \sum_{k=0}^{\infty}\delta_k <\infty.\tag{B''}
\end{align}

\section{Proof of Proposition \ref{compute_M}} Let $\Xi=I_n-\Sigma$. It can be proved that 
\begin{align*}
I_{I(|a|)}^T
\left(
I_{I(|a|)} Q^{-1}
I_{I(|a|)}^T\right)^{-1}
I_{I(|a|)}=(\Xi Q^{-1}\Xi)^{\dagger}=\Xi (\Xi Q^{-1}\Xi)^{\dagger}\Xi,
\end{align*}
where the last inequality follows from that $\Xi$ is a $0$-$1$ diagonal matrix. Then by \cite[Proposition 3]{li2018efficiently}, we can see that
\begin{align*}
P_0 &=  Q^{-1} - Q^{-1}I_{I(|a|)}^T
\left(
I_{I(|a|)} Q^{-1}
I_{I(|a|)}^T\right)^{-1}
I_{I(|a|)}Q^{-1}\\
&=Q^{-1} - Q^{-1}\Xi (\Xi Q^{-1}\Xi)^{\dagger}\Xi Q^{-1}\\
&= (\Sigma Q \Sigma)^{\dagger}.
\end{align*}
Since $Q = I_n +  2\rho ww^T \in \mathbb{R}^{n\times n}$, denoting $\hat{w}=\Sigma w$,
we have that
\begin{align*}
P_0 &= (\Sigma Q \Sigma)^{\dagger}
= (\Sigma + 2\rho \hat{w}\hat{w}^T)^{\dagger}=\Sigma-
\frac{2\rho}{1+2\rho (\hat{w}^T\hat{w})} \hat{w}\hat{w}^T.
\end{align*}
Note that $\Theta = {\rm Diag}({\rm sign}(a))$ is a diagonal matrix with its diagonal elements being $1$ or $-1$, 
\begin{equation*}
M_0 =  \Theta  \Omega  \Theta = \Theta (\Sigma-
\frac{2\rho}{1+2\rho (\hat{w}^T\hat{w})} \hat{w}\hat{w}^T) \Theta = \Sigma-
\frac{2\rho}{1+2\rho (\hat{w}^T\hat{w})} \tilde{w}\tilde{w}^T= \Sigma-
\frac{2\rho}{1+2\rho (\tilde{w}^T\tilde{w})} \tilde{w}\tilde{w}^T,
\end{equation*}
where $\tilde{w} = \Theta \hat{w}= \Theta\Sigma w$.

\section{ADMM for solving the regularized logistic regression problem}
The minimization form of the dual of \eqref{eq: Convex-composite-program} is given as 
\begin{align}
\min_{w,u\in \mathbb{R}^m,v\in \mathbb{R}^n} \{ h^*(w) + p^*(v)\mid {\cal A}^* u+v-c=0,w-u=0\}.\label{eq:dual_org}
\end{align}
The augmented Lagrangian function associated with \eqref{eq:dual_org} is 
\begin{align*}
{\cal L}_{\sigma}(w,u,v;x,y)=& h^*(w) + p^*(v)-\langle x,{\cal A}^* u+v-c\rangle - \langle y,w-u\rangle \\&+\frac{\sigma}{2}\|{\cal A}^* u+v-c\|^2+\frac{\sigma}{2}\|w-u\|^2.
\end{align*}
The alternating direction method of multipliers (ADMM) for solving \eqref{eq: Convex-composite-program} and \eqref{eq:dual_org} could be described as 
\begin{subequations}
	\begin{align}
	&u^{k+1}= \arg\min_u {\cal L}_{\sigma}(w^k,u,v^k;x^k,y^k),
	\label{eq:admm_u} \\
	&(w^{k+1},v^{k+1}) =\arg\min_{w,v} {\cal L}_{\sigma}(w,u^{k+1},v;x^k,y^k), \label{eq:admm_v}\\
	&x^{k+1} =x^k-\kappa\sigma({\cal A}^*u^{k+1}+v^{k+1}-c),\quad y^{k+1}=y^k-\kappa\sigma(w^{k+1}-u^{k+1}),\label{eq:admm_xy}
	\end{align}
\end{subequations}
where the step length $\kappa=1.618$ and $\sigma>0$ is a given parameter. For the subproblem \eqref{eq:admm_v}, $w$ and $v$ can be computed simultaneously as
\begin{align*}
w^{k+1}& ={\rm Prox}_{h^*/\sigma}(u^{k+1}+y^k/\sigma)\\
&=(u^{k+1}+y^k/\sigma)-\frac{1}{\sigma}{\rm Prox}_{\sigma h}(\sigma u^{k+1}+y^k),\\
v^{k+1}& ={\rm Prox}_{p^*/\sigma}(-{\cal A}^*u^{k+1}+c+x^k/\sigma)\\
&=(-{\cal A}^*u^{k+1}+c+x^k/\sigma)-\frac{1}{\sigma}{\rm Prox}_{\sigma p}(-\sigma {\cal A}^*u^{k+1}+\sigma c+x^k),
\end{align*}
where the Moreau identity ${\rm Prox}_{tp}(x)+t{\rm Prox}_{f^*/t}(x/t)=x$ is used.
For the subproblem \eqref{eq:admm_u}, the optimality condition is 
\begin{align*}
(I_m+{\cal A}{\cal A}^*)u = {\cal A}(c+x^k/\sigma-v^{k})+(w^k-y^k/\sigma).
\end{align*}
One can solve this linear system directly or use an iterative solver such as the preconditioned conjugate gradient method.

\section{The explanation of the model of index ETF}
Here we explain why we can drop the simplex constraint $x \geq 0$, $\sum_{i} x_i = 1$ in the index ETF application. We assume that we can short stocks in the market, which means we can drop the nonnegative constraint $x \geq 0$. Furthermore, we assume the interest rate is $r_C$. Then for a given return vector $r \in \mathbb{R}^n$ of n stocks and a portfolio vector $x^*$, the return of the whole investment is given by
\begin{align*}
r^Tx^* + (1-\sum_{i=1}^n x^*_i)r_C
= \sum_{i=1}^n (r_i - r_C)x_i^* + r_C.
\end{align*}
Then, if we assume $r_C = 0$, or just set
\[r_{\rm new} = r- r_C, \quad y_{\rm new} = y - r_C.\]
We could drop the constraint $\sum_{i=1}^n x_i = 1$ in the index ETF model.

\end{document}